\newtheorem{thr}{Theorem}
\newtheorem{lem}[thr]{Lemma}
\newtheorem{conj}[thr]{Conjecture}
\newtheorem{cor}[thr]{Corollary}
\newtheorem{prop}[thr]{Proposition}
\newtheorem{obs}[thr]{Observation}
\newtheorem{claim}[thr]{Claim}
\theoremstyle{definition}
\newtheorem{defn}[thr]{Definition}
\theoremstyle{remark}
\numberwithin{equation}{section}
\def\C{\mathbb{C}}
\def\M{\mathcal{M}}
\def\F{\mathcal{F}}
\def\Fb{\mathbb{F}}
\def\rank{\operatorname{rank}}
\begin{document}

\title[A counterexample to Strassen's direct sum conjecture]{A counterexample to Strassen's \\ direct sum conjecture}

\author{Yaroslav Shitov}
\email{yaroslav-shitov@yandex.ru}







\maketitle

The multiplicative complexity of systems of bilinear forms (and, in particular, the famous question of fast matrix multiplication) is an important area of research in modern theory of computation. One of the foundational papers on the topic is Strassen's work~\cite{Stra69}, which contains an $O(n^{\ln7/\ln2})$ algorithm for the multiplication of two $n\times n$ matrices. 
In his subsequent paper~\cite{Strassen} published in 1973, Strassen asked whether the multiplicative complexity of the union of two bilinear systems depending on different variables is equal to the sum of the multiplicative complexities of both systems. A stronger version of this problem was proposed in the 1981 paper~\cite{FW} by Feig and Winograd, who asked whether any optimal algorithm that computes such a pair of bilinear systems must compute each system separately. These questions became known as the \textit{direct sum conjecture} and \textit{strong direct sum conjecture}, respectively, and they were attracting a notable amount of attention during the four decades. As Feig and Winograd wrote, `\textit{either a proof of, or a counterexample to, the direct sum conjecture will be a major step forward in our understanding of complexity of systems of bilinear forms}.'

The modern formulation of this conjecture is based on a natural representation of a bilinear system as a three-dimensional \textit{tensor}, that is, an array of elements $T(i|j|k)$ taken from a field $\F$, where the triples $(i,j,k)$ run over the Cartesian product of finite \textit{indexing sets} $I, J, K$. A tensor $T$ is called \textit{decomposable} if $T=a\otimes b\otimes c$ (which should be read as $T(i|j|k)=a_ib_jc_k$), for some vectors $a\in\F^I$, $b\in\F^J$, $c\in\F^K$. The \textit{rank} of a tensor $T$, or the \textit{multiplicative complexity} of the corresponding bilinear system, is the smallest $r$ for which $T$ can be written as a sum of $r$ decomposable tensors with entries in $\F$. We denote this quantity by $\rank_\F T$, and we note that the rank of a tensor may change if one allows to take the entries of decomposable tensors as above from an extension of $\F$, see~\cite{Berg}. Taking the union of two bilinear systems depending on disjoint sets of variables corresponds to the \textit{direct sum} operation on tensors. More precisely, if $T$ and $T'$ are tensors with disjoint indexing sets $I,I',J,J',K,K'$, then we can define the direct sum $T\oplus T'$ as a tensor with indexing sets $I\cup I'$, $J\cup J'$, $K\cup K'$ such that the $(I|J|K)$ block equals $T$ and $(I'|J'|K')$ block equals $T'$, and all entries outside of these blocks are zero. In other words, direct sums of tensors are a multidimensional analogue of block-diagonal matrices; a basic result of linear algebra says that the ranks of such matrices are equal to the sums of the ranks of their diagonal blocks. Strassen's direct sum conjecture is a three-dimensional analogue of this statement.

\begin{conj}\label{ConStrassen}\textup{(See~\cite{Strassen}.)}
If $T_1, T_2$ are tensors over an infinite field $\F$, then $$\rank_\F(T_1\oplus T_2)=\rank_\F\, T_1+\rank_\F\, T_2.$$
\end{conj}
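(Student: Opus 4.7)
Since the paper's title announces a counterexample, the natural task is to \emph{disprove} Conjecture~\ref{ConStrassen} by exhibiting tensors $T_1,T_2$ over an infinite field $\F$ with $\rank_\F(T_1\oplus T_2)<\rank_\F T_1+\rank_\F T_2$. The inequality $\leq$ is immediate by concatenating rank decompositions of the two summands, so only a strictly shorter decomposition of some explicit direct sum needs to be produced. A guiding heuristic is Sch\"onhage's asymptotic sum inequality, which already shows that the direct-sum equality fails for \emph{border} rank; this suggests looking for a candidate $T_1$ whose rank and border rank differ appreciably, so that a decomposition of $T_1\oplus T_2$ can exploit the second block as a source of corrections unavailable to any decomposition of $T_1$ alone.

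My plan would be to choose a small, highly symmetric $T_2$ of precisely known rank to serve as a catalyst, and a tensor $T_1$ whose exact rank can be certified by an explicit lower-bound technique (the substitution method, slice manipulations, or a flattening argument). Then I would hand-craft a decomposition of $T_1\oplus T_2$ into $\rank_\F T_1+\rank_\F T_2-1$ decomposable summands in which nearly every summand has nonzero components in \emph{both} blocks, with the contributions to the $T_1$- and $T_2$-blocks coupled so that the summands collectively reproduce $T_1$ and $T_2$ exactly, even though no subset of them forms a standalone decomposition of either block. Making these two bookkeeping constraints compatible, while simultaneously killing the cross-block contributions that such summands inevitably produce, is the heart of the construction.

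The main obstacle is the certification of exact tensor ranks. While exhibiting a decomposition of $T_1\oplus T_2$ of size $\rank_\F T_1+\rank_\F T_2-1$ is a matter of explicit construction, proving that $\rank_\F T_1$ and $\rank_\F T_2$ equal their claimed values requires rigorous lower bounds applied to each summand in isolation, and tensor rank is NP-hard to compute in general. A plausible route is to keep both tensors small enough that their ranks can be pinned down by direct combinatorial arguments, yet large enough to leave room for the nontrivial mixing described above; finding the right pair $(T_1,T_2)$ sitting in this sweet spot is where I would expect most of the work to go.
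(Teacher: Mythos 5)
Your proposal is a research plan, not a proof: no tensors $T_1,T_2$ are constructed, no decomposition of $T_1\oplus T_2$ is exhibited, and no rank lower bound is certified, so the conjecture is neither proved nor refuted by what you have written. Beyond that, the specific plan --- find small explicit tensors whose exact ranks can be ``pinned down by direct combinatorial arguments'' and then hand-craft a shorter mixed decomposition of the direct sum --- runs directly into the reason the problem stayed open for four decades. Essentially every elementary rank-certification technique (the substitution method, flattenings, the slice-elimination lemma that appears as Lemma~\ref{lemcompl} in the paper, the results of JaJa--Takche when one dimension is at most two) is compatible with additivity: whenever such a method certifies the ranks of $T_1$ and $T_2$, the same method typically certifies the sum as a lower bound for $T_1\oplus T_2$. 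So the ``sweet spot'' you are looking for --- tensors small enough to certify yet large enough to admit nontrivial mixing --- is precisely what one should not expect to exist, and your proposal gives no idea for escaping this tension.

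The paper's route is structurally different on both fronts. First, the rank lower bound on the summands is \emph{not} obtained by certifying exact ranks of explicit small tensors; instead $T_1$ and $T_2$ are built by a gadget (Claim~\ref{claimmain1}, adjoining many rank-one slices to a cloned tensor) whose whole purpose is that Lemma~\ref{lemcompl5} computes their ranks \emph{exactly} as $\min\rank\,T\,\mathrm{mod}(W_1,W_2,W_3)$ plus the dimension of the adjoined spaces, sidestepping the NP-hardness issue you raise. Second, the strict saving in the direct sum does not come from a hand-crafted mixed decomposition but from Claim~\ref{claimmain2}: a large \emph{generic} tensor $T$ of format $2n\times 2n\times 2n$ with $\lfloor 0.34n^2\rfloor$ terms, whose rank is trivially at most $r(n)$, while transcendence-degree counting (Lickteig's theorem plus a Jacobian criterion) shows that each diagonal block, even after reduction modulo the off-diagonal slices, still needs more than $r(n)/2$ terms. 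The ``mixing'' you gesture at is realized concretely as the off-diagonal blocks of $T$, which get absorbed into the adjoined slice spaces of the gadgets. If you want to salvage your approach, the missing ideas are exactly these two: a mechanism that converts a ``rank modulo a subspace of slices'' quantity into an exact rank, and a non-constructive (generic/dimension-count) source of lower bounds that is not additive under direct sums.
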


The `$\leqslant$' inequality in Conjecture~\ref{ConStrassen} is obvious because a direct sum of decompositions of $T_1$ and $T_2$ is a decomposition of $T_1\oplus T_2$. If every optimal decomposition of $T_1\oplus T_2$ arises in this way, then $(T_1,T_2)$ are said to satisfy the \textit{strong direct sum conjecture}, see~\cite{AFW, Bshouty2}. Both Conjecture~\ref{ConStrassen} and its strong form remained open to this date, and the early progress on these problems included the papers~\cite{Bshouty, dGH} solving them in different special cases using a specific correspondence of bilinear systems and associative algebras over $\F$. In 1986, JaJa and Takshe (see~\cite{JJT}) proved Conjecture~\ref{ConStrassen} in the case when one of the dimensions of either $T_1$ or $T_2$ is at most two, and this result was subsequently generalized and discussed from the point of view of algebraic geometry by Landsberg and Micha\l{}ek in~\cite{LMat}. The geometric approach lead to many new sufficient conditions for the direct sum conjecture (see~\cite{Teit}) and presented several interesting equivalent formulations, generalizations, and relaxations of these conjectures in the language of algebraic geometry, see~\cite{BGL,CCO, LandBook}.

\section{Preliminaries}

In our paper, we construct a counterexample to the direct sum conjecture. We begin with several additional definitions, known results, and notational conventions.

\subsection{Notation.} Let $\F$ be a field; we denote by $\overline{\F}$ the algebraic closure of $\F$. For a tensor $T$ in $\F^{I\times J\times K}$ and $k\in K$, we define the $k$th \textit{$3$-slice} as a matrix in $\F^{I\times J}$ whose $(i,j)$ entry equals $T(i|j|k)$. For all $i\in I$, $j\in J$, we can define the $i$th $1$-slice and $j$th $2$-slice of $T$ in a similar way. 
The \textit{support} of $T$ is the smallest set $I_0\times J_0\times K_0\subset I\times J\times K$ containing all the non-zero entries of $T$, and the sets $I_0$, $J_0$, $K_0$ are called the $1$-, $2$-, and $3$-supports of $T$. Two tensors are called \textit{equivalent} if they become equal when restricted to their supports. If $\Sigma$ is a non-empty finite set, then we define the $\Sigma$-\textit{clone} of $T$ as the tensor $T_\Sigma$ obtained from $T$ by taking $|\Sigma|$ copies of every element in every indexing set. Namely, we define the tensor $T_\Sigma$ with indexing sets $\Sigma\times I$, $\Sigma\times J$, $\Sigma\times K$ as $T_\Sigma(s_1,i|s_2,j|s_3,k)=T(i|j|k)$ for all $s_1,s_2,s_3\in\Sigma$. Clearly, taking the clone of a tensor does not change its rank. All these notions can be defined for matrices instead of tensors in an analogous way.



\subsection{Eliminating rank-one slices.} 
Now let $V_1\subset\F^{J\times K}$, $V_2\subset\F^{I\times K}$, $V_3\subset\F^{I\times J}$ be $\F$-linear spaces consisting of matrices. By $T\,\mathrm{mod}\,(V_1,V_2,V_3)$ we denote the set of all tensors that can be obtained from $T$ by adding elements of $V_1$ to the $1$-slices of $T$, elements of $V_2$ to the $2$-slices of the resulting tensor, and elements of $V_3$ to the $3$-slices of what we obtained after adding the $1$- and $2$-slices. The following statement is well known, see e.g. Lemma~2 in~\cite{HK} and Proposition~3.1 in~\cite{LMat}.

\begin{lem}\label{lemcompl}
Let $T\in\F^{I\times J\times K}$ and $K=\{1,\ldots,k\}\cup\{1',\ldots,k'\}$. Denote an $i$th $3$-slice of $T$ by $S_i$ and let $V$ be the $\F$-linear span of $S_{1'},\ldots,S_{k'}$. Then
$$\operatorname{rank}_\F T\geqslant \min \operatorname{rank}_\F T\,\mathrm{mod}(0,0,V)+\dim V,$$ and if $S_{1'},\ldots,S_{k'}$ are also rank-one, then the equality holds.
\end{lem}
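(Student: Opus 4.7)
The plan is to handle the two directions separately: the inequality via the substitution method applied to an optimal decomposition of $T$, and the matching upper bound (in the rank-one case) by a direct rank-$d$ decomposition of the correction.

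For the inequality, I would fix a decomposition $T = \sum_{\ell=1}^{r} u_\ell \otimes v_\ell \otimes w_\ell$ of length $r = \rank_\F T$ and write $M_\ell := u_\ell v_\ell^T$, so that each 3-slice reads $S_k = \sum_\ell w_{\ell,k} M_\ell$. The linear map $\Phi \colon \F^r \to \F^{I \times J}$, $c \mapsto \sum_\ell c_\ell M_\ell$, sends the row span of the coefficient matrix $Y = (w_{\ell,i'})_{i',\ell}$ surjectively onto $V$. After relabeling I can pick rows $1',\ldots,d'$ of $Y$ whose $\Phi$-images $S_{1'},\ldots,S_{d'}$ form a basis of $V$; the independence of these images forces the rows themselves to be independent in $\F^r$, so a second relabeling of $\{1,\ldots,r\}$ arranges that the first $d$ columns of the resulting $d \times r$ submatrix form an invertible block $Y_0$. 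Inverting $Y_0$ then expresses each $M_\ell$ with $\ell \le d$ as an element of $V$ plus a linear combination of $M_{d+1},\ldots,M_r$. Substituting these expressions into $T = \sum_\ell M_\ell \otimes w_\ell$ and regrouping produces a decomposition $T = T^{(V)} + T^{**}$, where $T^{(V)}$ has every 3-slice in $V$ and $T^{**}$ is a sum of only $r - d$ decomposable tensors. Hence $T - T^{(V)} = T^{**}$ lies in $T\,\mathrm{mod}\,(0,0,V)$ and has rank at most $r-d$, which is the desired inequality.

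For the equality clause, assume the $S_{i'}$ are rank-one and pick a rank-minimizer $T^* \in T\,\mathrm{mod}\,(0,0,V)$. The difference $\Delta := T - T^*$ has every 3-slice in $V$, and $V$ admits a basis of $d$ rank-one matrices $a_{i'} b_{i'}^T$ drawn from the $S_{i'}$; expanding each 3-slice of $\Delta$ in this basis and reshaping yields $\Delta = \sum_{i'=1}^{d} a_{i'} \otimes b_{i'} \otimes \gamma_{i'}$ for suitable $\gamma_{i'} \in \F^K$, so $\rank_\F \Delta \le d$. Subadditivity of rank then gives $\rank_\F T \le \rank_\F T^* + d$, matching the lower bound. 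The main obstacle is the substitution step of the inequality: I have to guarantee that a basis of $V$ is realized by actual slices $S_{i'}$ and that the associated $d \times d$ submatrix $Y_0$ is invertible. Both points boil down to an elementary dimension count for $\Phi$ restricted to the row span of $Y$; once they are in hand, the rest is routine linear algebra.
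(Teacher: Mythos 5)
Your proof is correct. The paper does not prove Lemma~\ref{lemcompl} itself but cites it as well known (Lemma~2 in~\cite{HK}, Proposition~3.1 in~\cite{LMat}), and your argument is exactly the standard one behind those references: the substitution method (invert a $d\times d$ block of the coefficient matrix to rewrite $d$ of the rank-one summands as elements of $V$ plus combinations of the remaining $r-d$ summands) for the inequality, and the rank-$\dim V$ correction tensor built from a rank-one basis of $V$ for the converse. Both steps, including the observation that independence of the slices $S_{1'},\ldots,S_{d'}$ forces independence of the corresponding rows of $Y$, are sound.
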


Now let finite sets $\M_1,\M_2,\M_3$ 
be bases of the linear spaces $V_1,V_2,V_3$ as above. We define the tensor $\mathcal{T}$ with indexing sets $I\cup\M_1$, $J\cup\M_2$, $K\cup\M_3$ as 

\noindent (1) $\mathcal{T}(\alpha|\beta|\gamma)=T(\alpha|\beta|\gamma)$ if $(\alpha,\beta,\gamma)\in I\times J\times K$;

\noindent (2) for any $\chi\in\{1,2,3\}$ and any $m\in\M_\chi$, the $m$th $\chi$-slice of $\mathcal{T}$ is equivalent to $m$ (that is, coincides with $m$ up to adding zero rows and columns).

We say that $\mathcal{T}$ is obtained from $T$ by \textit{adjoining} the $1$-slices $\M_1$, the $2$-slices $\M_2$, and the $3$-slices $\M_3$, or simply by adjoining $(\M_1,\M_2,\M_3)$. The result below follows from Lemma~\ref{lemcompl}.

\begin{lem}\label{lemcompl5}
Let $T$, $\mathcal{T}$, $V_1$, $V_2$, $V_3$ be as above in this subsection. Then 
$$\operatorname{rank}_\F\mathcal{T}\geqslant \min \operatorname{rank}_\F T\,\mathrm{mod} (V_1,V_2,V_3)+\dim V_1+\dim V_2+\dim V_3,$$ and if the matrices in the $\M_i$'s are rank-one, then the equality holds.
\end{lem}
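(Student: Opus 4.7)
The plan is to iterate Lemma~\ref{lemcompl} three times, once along each axis, using the invariance of tensor rank under permuting the three indexing sets to access the symmetric forms of that lemma which eliminate $1$- or $2$-slices rather than $3$-slices. For the first step, apply Lemma~\ref{lemcompl} along the $3$-axis of $\mathcal{T}$, taking $\M_3$ as the ``new'' indices. The $\M_3$ $3$-slices of $\mathcal{T}$ are the matrices in $\M_3$, embedded into $\F^{(I\cup\M_1)\times(J\cup\M_2)}$ by zero padding, so they span a space of dimension $\dim V_3$. Since any tensor in $\mathcal{T}\,\mathrm{mod}\,(0,0,V_3)$ can be taken to have its $\M_3$ $3$-slices equal to zero (at which point the rank equals that of the restriction to $(I\cup\M_1)\times(J\cup\M_2)\times K$), the minimum appearing on the right-hand side of Lemma~\ref{lemcompl} equals the minimum rank of $\mathcal{T}_{12}$ modified on the $I\times J$ blocks of its $3$-slices by elements of $V_3$, where $\mathcal{T}_{12}$ denotes $T$ with only $\M_1$ and $\M_2$ adjoined.

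Iterate: apply the symmetric form of Lemma~\ref{lemcompl} along the $2$-axis to a minimizer, and then along the $1$-axis. At each stage the $\M_\chi$-slices being eliminated are unaffected by the earlier modifications, because those modifications are supported on the $I\times J\times K$ block while the $\M_\chi$-slices live on disjoint boundary rows/columns; consequently the relevant spans still have dimensions $\dim V_2$ and $\dim V_1$. After the third reduction, the restriction to $I\times J\times K$ is forced to lie in $T\,\mathrm{mod}\,(V_1,V_2,V_3)$, and the three inequalities combine to give the claimed lower bound on $\rank_\F\mathcal{T}$.

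For the equality clause, assume every $m\in\M_1\cup\M_2\cup\M_3$ is rank-one. Equality in each invocation of Lemma~\ref{lemcompl} then follows from that lemma's own equality clause. For the matching upper bound, take $T^*\in T\,\mathrm{mod}\,(V_1,V_2,V_3)$ of minimum rank $\rho$ and write $T^*-T=X_1+X_2+X_3$ with each $X_\chi$ having $\chi$-slices in $V_\chi$. Expand each $X_\chi$ in the basis $\M_\chi$; since $m=a_m\otimes b_m$ for each $m\in\M_\chi$, the tensor $(\text{adjoined }\M_\chi\text{-slice contribution to }\mathcal{T})-X_\chi$ can be rewritten as a sum of $\dim V_\chi$ rank-one tensors, each using the adjoined basis vector at $m$ together with $a_m$ and $b_m$ in the remaining axes and a corrective term on the original axis. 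Combining the three such rewritings with a rank-$\rho$ decomposition of $T^*$ (extended by zero to the enlarged indexing sets) produces a decomposition of $\mathcal{T}$ with $\rho+\dim V_1+\dim V_2+\dim V_3$ rank-one terms, matching the lower bound.

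The main difficulty is the bookkeeping at the iteration stages: one must check that the modifications introduced by earlier axis reductions do not disturb the spans of the slices being eliminated next, and that the three successive minima over modifications on the three axes collapse into the single minimum over $T\,\mathrm{mod}\,(V_1,V_2,V_3)$. Both of these follow from the fact that the allowed modifications $V_\chi$ are supported on the $I\times J\times K$ block, which is disjoint from the boundary region where the adjoined slices live.
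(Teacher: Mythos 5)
Your proof is correct and follows the route the paper intends: the paper gives no argument beyond the remark that the lemma ``follows from Lemma~\ref{lemcompl}'', and your three successive applications of that lemma along the three axes, together with the observation that the adjoined slices live outside the $I\times J\times K$ block and so are undisturbed by earlier modifications, is exactly the intended iteration. The explicit rank-one upper-bound construction in your third paragraph is a valid (if slightly redundant) supplement to the equality clauses of Lemma~\ref{lemcompl}.
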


\subsection{The main result and our strategy.}

We are going to refute Conjecture~\ref{ConStrassen} by proving the following result.

\begin{thr}\label{thrmain2}
For any infinite field $\F$, there exist tensors $T_1$, $T_2$ over $\F$ such that $$\rank_\F(T_1\oplus T_2)<\rank_{\overline{\F}}\, T_1+\rank_{\overline{\F}}\, T_2.$$
\end{thr}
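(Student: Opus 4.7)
The plan is to exhibit concrete tensors $T_1, T_2$ over $\F$ together with (i) an explicit short decomposition of $T_1 \oplus T_2$ of length $r$, and (ii) lower bounds $\rank_{\overline{\F}} T_i \geq r_i$ satisfying $r_1 + r_2 > r$. Since the claim concerns every infinite field, the construction should be uniform: the underlying combinatorial recipe must work with entries drawn from the prime subfield (or at worst from $\mathbb{Z}$), so that no algebraic peculiarity of $\F$ is being exploited.

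First I would fix a small \emph{core} tensor $T$ whose rank behaviour is well-understood, and then build $T_1$ and $T_2$ by adjoining carefully chosen collections of rank-one slices $(\M_1^{(i)},\M_2^{(i)},\M_3^{(i)})$ to clones $T_\Sigma$ of $T$. This adjoining step is the main technical lever: by Lemma~\ref{lemcompl5}, it expresses $\rank_\F T_i$ as $\dim V_1^{(i)}+\dim V_2^{(i)}+\dim V_3^{(i)}$ plus the minimum of $\rank_\F$ over $T_i\,\mathrm{mod}(V_1^{(i)},V_2^{(i)},V_3^{(i)})$. The adjoined spaces should therefore be picked so that this ``residual'' modular rank can be pinned down from below, and so that the quantities $r_i$ remain valid even after base change to $\overline{\F}$.

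The heart of the construction is a decomposition of $T_1 \oplus T_2$ that is strictly shorter than $r_1+r_2$. Here I would exploit the fact that, in a direct sum, a rank-one term is allowed to have simultaneous support in the $I\cup I'$, $J\cup J'$, and $K\cup K'$ components; such ``cross'' rank-one terms are unavailable in any decomposition of $T_1$ or $T_2$ alone, and they can be used to merge what would otherwise be two separate contributions into a single term. The adjoined slices of the two tensors should be engineered so that the cross-terms simultaneously supply the needed data in one block and cancel an equal amount of overhead in the other, producing a provable saving of at least one.

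The principal obstacle is step~(ii): proving the rank lower bounds $\rank_{\overline{\F}} T_i\geq r_i$ over an algebraically closed field. Lemma~\ref{lemcompl5} reduces this to bounding the modular rank of a smaller tensor, but such bounds are notoriously rigid and cannot be obtained from flattening alone once the rank exceeds the slice dimensions. I would attack this via a substitution-style analysis, arguing that in any hypothetical decomposition of $T_i$ of length $<r_i$ the adjoined rank-one slices force a prescribed number of terms to specialize in an essentially unique way, leaving an unavoidable residue whose rank is inherited from the core tensor $T$ and is invariant under scalar extension. Making this residual bound survive algebraic closure---which is precisely what makes the final inequality strictly stronger than the original Strassen conjecture---is what will dictate the exact choice of $T$ and of the adjoined families $\M_j^{(i)}$.
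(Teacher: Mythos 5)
Your high-level architecture---clone a core tensor, adjoin rank-one slices so that Lemma~\ref{lemcompl5} converts rank into ``modular rank plus dimension'', and exploit rank-one terms whose support crosses both summands---does match the skeleton of the paper's proof of Theorem~\ref{thrmain}. But two of your three steps have genuine gaps.

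First, the lower bound. You propose an explicit core tensor with entries in the prime subfield and a ``substitution-style analysis'' of hypothetical short decompositions. This is the step most likely to fail: substitution-type lower bounds are exactly the ones known to propagate additively through direct sums (this is how the positive cases in~\cite{JJT,LMat} are established), so a pair $(T_1,T_2)$ whose ranks you can certify that way is a pair for which the conjecture tends to hold. The paper avoids this by taking the core tensor \emph{generic}: $T=\sum_{\alpha=1}^{r(n)}(x^\alpha,\xi^\alpha)\otimes(y^\alpha,\gamma^\alpha)\otimes(z^\alpha,\zeta^\alpha)$ with $r(n)=\lfloor 0.34\,n^2\rfloor$ and algebraically independent entries. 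The lower bound on the two modular ranks is then a transcendence-degree count (Proposition~\ref{prcl22}, via the Jacobian criterion and Lickteig's theorem): each diagonal block retains almost $n^3$ degrees of freedom over the field generated by everything one is allowed to quotient by, hence has modular rank greater than $r(n)/2$, while the whole tensor visibly has rank at most $r(n)$. The passage to actual entries in $\F$ happens only at the very end, by quantifier elimination, not by writing the tensor over the prime subfield from the start.

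Second, the adjoining step. Lemma~\ref{lemcompl5} gives \emph{equality} only when the adjoined matrices are rank one, but the spaces one must quotient by (in the paper, the spans of the slices of the off-diagonal blocks of $T$) are spanned by matrices of high rank. You cannot simply ``pick the adjoined spaces'' to be those spans. The entire content of Claim~\ref{claimmain1} and Section~2 is a combinatorial device (the matrices $M_\Phi$ built from partitions indexed by $\Sigma=\{0,\ldots,\sigma-1\}^\theta$) producing a family of rank-one matrices whose span contains the clones of the prescribed high-rank matrices, together with a proof (Lemma~\ref{lemlemtbm}, Corollary~\ref{corcl1}) that quotienting by this much larger span does not decrease the modular rank. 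Your proposal does not identify this obstruction, and without resolving it the chain of equalities you want from Lemma~\ref{lemcompl5} does not close.
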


Of course, the assertion with $\overline{\F}$ replaced by $\F$ in the summands of the right-hand side can only be weaker than the initial one, so Theorem~\ref{thrmain} disproves Strassen's conjecture in the original formulation, that is, over any infinite field. Our counterexample rests on the following two claims, which we prove in subsequent sections.

\begin{claim}\label{claimmain1}
Let $T\in \F^{I\times J\times K}$ and assume 
$W_1,W_2,W_3$ are $\overline{\F}$-linear subspaces of $J\times K$, $I\times K$, $I\times J$ matrices, respectively. We assume that $W_1,W_2,W_3$ have bases consisting of matrices with entries in $\F$. Then there are a finite set $\Sigma$ and $$\M_1\subset \F^{(\Sigma\times J)\times(\Sigma\times K)},\,\,\,\,\M_2\subset \F^{(\Sigma\times I)\times(\Sigma\times K)},\,\,\,\,\M_3\subset \F^{(\Sigma\times I)\times(\Sigma\times J)}$$ consisting of finitely many rank-one matrices such that 

\noindent (1) $\operatorname{span}\M_\delta$ contains the $\Sigma$-clone of every $w_\delta\in W_\delta$, and

\noindent (2) the tensor $\mathcal{T}$ obtained by adjoining $(\M_1,\M_2,\M_3)$ to the $\Sigma$-clone of $T$ satisfies $\rank_{\overline{\F}}\,\mathcal{T}=\min\rank_{\overline{\F}}\,T\,\mathrm{mod}(W_1,W_2,W_3)+\sum_{\delta=1}^3\dim\operatorname{span}\M_\delta.$
\end{claim}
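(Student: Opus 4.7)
My plan to prove Claim~\ref{claimmain1} is to reduce condition~(2) to a rank-equality over $\F$ using a direct $\Sigma$-clone construction, then to establish that equality via a genericity argument exploiting the hypothesis that $\F$ is infinite (as in the main theorem).

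\emph{Construction.} For each $\delta\in\{1,2,3\}$, fix an $\overline{\F}$-basis $\{b_{\delta,s}\}$ of $W_\delta$ consisting of $\F$-matrices (which exists by hypothesis), and write each $b_{\delta,s}=\sum_p u_{\delta,s,p}v_{\delta,s,p}^T$ as a sum of rank-one $\F$-matrices, with the vectors to be chosen generically. Let $\Sigma$ be any finite nonempty set, and set
\[
\M_\delta=\{(1_\Sigma\otimes u_{\delta,s,p})(1_\Sigma\otimes v_{\delta,s,p})^T : s, p\},
\]
a set of rank-one matrices in $\F^{(\Sigma\times J)\times(\Sigma\times K)}$ (and analogously for the other $\delta$). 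Writing $U_\delta\subseteq\F^{J\times K}$ for the $\F$-span of $\{u_{\delta,s,p}v_{\delta,s,p}^T\}$, we have $\operatorname{span}\M_\delta=1_\Sigma 1_\Sigma^T\otimes U_\delta$, which contains $1_\Sigma 1_\Sigma^T\otimes b_{\delta,s}$ for each $s$; extending by $\overline{\F}$-linearity gives condition~(1).

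\emph{Reduction.} By Lemma~\ref{lemcompl5}, $\rank_{\overline{\F}}\mathcal{T}=\min\rank_{\overline{\F}}T_\Sigma\,\mathrm{mod}(V_1,V_2,V_3)+\sum_\delta\dim V_\delta$, where $V_\delta=\operatorname{span}\M_\delta$, so condition~(2) amounts to
\[
\min\rank_{\overline{\F}}T_\Sigma\,\mathrm{mod}(V_1,V_2,V_3)=\min\rank_{\overline{\F}}T\,\mathrm{mod}(W_1,W_2,W_3).
\]
Since each $V_\delta$ consists of $\Sigma$-clones of elements of $U_\delta$, a projection/cloning argument shows the left-hand side equals $\min\rank_{\overline{\F}}T\,\mathrm{mod}(U_1,U_2,U_3)$: the $\leq$ direction lifts an optimal $T$-modification to $T_\Sigma$ by cloning (which preserves rank), and the $\geq$ direction projects an optimal $T_\Sigma$-modification onto fixed coordinates in each $\Sigma$-factor, yielding a $T\,\mathrm{mod}(U_1,U_2,U_3)$-element of no larger rank. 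It therefore suffices to show
\[
\min\rank_{\overline{\F}}T\,\mathrm{mod}(U_1,U_2,U_3)=\min\rank_{\overline{\F}}T\,\mathrm{mod}(W_1,W_2,W_3).
\]

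\emph{Main obstacle.} The $\leq$ direction is immediate since $U_\delta$ contains every $b_{\delta,s}$. For $\geq$, I would treat the parameters $(u_{\delta,s,p},v_{\delta,s,p})$ as coordinates on an affine variety and argue that the ``bad'' locus---decompositions for which the enlarged $U_\delta$ strictly lowers the minimum rank---is a proper Zariski-closed subset. Since $\F$ is infinite, generic $\F$-rational choices of decomposition vectors then lie outside this bad locus. Properness can be argued iteratively: for each added rank-one direction $\xi$, the non-intersection of the affine modification space $T\,\mathrm{mod}(W_\delta+\overline{\F}\xi,\ldots)$ with the closed variety of tensors of rank less than $r=\min\rank_{\overline{\F}}T\,\mathrm{mod}(W_1,W_2,W_3)$ is an open condition in $\xi$ that already holds at $\xi=0$, so it holds on a Zariski-open set of $\xi$. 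Rigorously carrying out this genericity argument---particularly handling simultaneous rank-one additions across all three modes and all basis elements---is the main technical difficulty I foresee.
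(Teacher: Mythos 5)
The construction and the reduction via Lemma~\ref{lemcompl5} are fine as far as they go, but the entire content of Claim~\ref{claimmain1} is concentrated in the step you defer: showing that $\min\rank_{\overline{\F}}T\,\mathrm{mod}(U_1,U_2,U_3)=\min\rank_{\overline{\F}}T\,\mathrm{mod}(W_1,W_2,W_3)$ when $U_\delta$ is the span of the rank-one summands of a decomposition of a basis of $W_\delta$. This is a genuine gap, and the genericity argument you sketch does not close it. First, the set of tensors of rank less than $r$ is \emph{not} a Zariski-closed variety (its closure is the border-rank locus), so ``non-intersection with it is an open condition'' is unfounded; the set of $\xi$ for which the minimum drops is merely constructible, and the fact that it misses $\xi=0$ tells you nothing about it missing a dense open set, since $\{0\}$ is closed. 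Second, your rank-one vectors cannot be chosen generically: they are constrained to sum to the fixed $\F$-matrices $b_{\delta,s}$, so in any decomposition of a rank-$r$ basis matrix into $r$ terms the last term is determined by the others, and the iterated ``add one generic $\xi$ at a time'' scheme breaks down exactly there. Third, and most tellingly, $U_\delta$ always contains the individual rank-one summands themselves, i.e.\ low-rank matrices lying outside $W_\delta$; adding a rank-one matrix to a slice can lower tensor rank, and nothing in your argument rules this out. The fact that your $\Sigma$ is ``any finite nonempty set'' is a symptom: if the claim could be proved with $|\Sigma|=1$, one would be asserting that every $\F$-subspace of matrices has a rank-one spanning overset that is rank-inert, which is precisely what fails in general.

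The paper's proof works because the cloning is essential and enormous: for a single rank-$r$ matrix $W$ it takes $\Sigma=\{0,\ldots,\sigma-1\}^\theta$ with $\sigma=2\rho^2r$ and $\rho=2|I\times J\times K|+1$, and builds (Definition~\ref{defmd}) a combinatorial family $\M$ of rank-one $0$-$1$ matrices whose span contains the clone of $W$ (Observation~\ref{obsspan}) but has the property (Lemma~\ref{lemdiag}, Lemma~\ref{lemlemtbm}) that any member of $\operatorname{span}\M$ of rank below $\rho$ restricts, on suitably chosen clone coordinates, to a scalar multiple of $W$. Since adding a matrix of rank at least $\rho$ to a slice can only increase the rank of the tensor, this yields Corollary~\ref{corcl1}, and the general case is handled one basis matrix at a time by recursion. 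If you want to salvage your route, you would need to replace the openness heuristic by an explicit mechanism forcing every ``extra'' element of $\operatorname{span}\M_\delta$ to be harmless, and that is exactly what the clone construction provides.
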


\begin{claim}\label{claimmain2}
For any infinite field $\F$, there is $T\in\F^{(I_1\cup I_2)\times (J_1\cup J_2)\times (K_1\cup K_2)}$ such that
$$\rank_\F T<\min\rank_{\overline{\F}}\,T_{111}\,\mathrm{mod}(U_1,U_2,U_3)+\min\rank_{\overline{\F}}\,T_{222}\,\mathrm{mod}(V_1,V_2,V_3),$$
where the indexing sets $I_1,I_2,J_1,J_2,K_1,K_2$ are disjoint, the tensor $T_{ijk}$ is the $(I_i|J_j|K_k)$ block of $T$, and 
$U_1$, $U_2$, $U_3$, $V_1$, $V_2$, $V_3$ are the $\overline{\F}$-linear spaces spanned, respectively, by the $1$-slices of $T_{211}$, by the $2$-slices of $T_{121}$, by the $3$-slices of $T_{112}$, by the $1$-slices of $T_{122}$, by the $2$-slices of $T_{212}$, and by the $3$-slices of $T_{221}$.
\end{claim}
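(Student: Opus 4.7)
\medskip

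\noindent\textbf{Proof plan.} My plan is to construct an explicit tensor $T$ witnessing the desired strict inequality. The structural observation driving the construction is that any $\F$-rank-one term $a\otimes b\otimes c$ with $a\in\F^{I_1\cup I_2}$, $b\in\F^{J_1\cup J_2}$, $c\in\F^{K_1\cup K_2}$ contributes simultaneously to all eight blocks $T_{ijk}$, so a global decomposition in which most rank-one summands have non-trivial projections to several blocks is inherently much more efficient than decomposing each block in isolation. The construction will exploit this phenomenon quantitatively.

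I would begin by fixing a small ``base'' tensor $S$ of moderate size whose $\F$-rank is well understood and which admits a structural identity that lowers its rank when specific rank-one modifications are added to its slices. I would then take $T_{111}$ and $T_{222}$ to be copies (or mild variants) of $S$, and choose the six off-diagonal blocks so that the slice spans $U_1,U_2,U_3,V_1,V_2,V_3$ record precisely the ``directions'' in which $S$ is most efficient to modify. A natural realization is to populate each off-diagonal block with a small number of rank-one slices whose presence in a global decomposition of $T$ can be inherited from rank-one terms that simultaneously project non-trivially into $T_{111}$ or $T_{222}$.

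With such a choice, the upper bound on $\rank_\F T$ should come from writing out the explicit decomposition in which each rank-one term simultaneously serves one of the diagonal blocks together with the adjacent off-diagonal blocks, keeping the total count of summands small. The main obstacle will be the matching lower bound: establishing that $\min\rank_{\overline{\F}}T_{111}\,\mathrm{mod}\,(U_1,U_2,U_3)$ and $\min\rank_{\overline{\F}}T_{222}\,\mathrm{mod}\,(V_1,V_2,V_3)$ remain large even though the modification subspaces have been arranged to help. This is a rigidity-type statement asserting that, even over $\overline{\F}$, no slice corrections drawn from the prescribed subspaces can reduce the rank of $S$ below a certain threshold; it is likely to require an invariant-theoretic or dimension-counting argument tailored to the specific base tensor $S$. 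The hypothesis that $\F$ is infinite should enter in guaranteeing enough free parameters to carry the explicit $\F$-decomposition, while $S$ continues to resist low-rank modification over $\overline{\F}$; combining the two bounds then yields the claimed inequality.
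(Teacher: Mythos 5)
This is a plan, not a proof: the two load-bearing steps are both left unproved. You never specify the tensor $T$ (the ``base tensor $S$'', its ``structural identity'', and the off-diagonal blocks are all placeholders), and you explicitly defer the lower bound on $\min\rank_{\overline{\F}}T_{111}\,\mathrm{mod}\,(U_1,U_2,U_3)$ to an unspecified ``invariant-theoretic or dimension-counting argument''. That lower bound is the entire difficulty of the claim, and nothing in the proposal indicates how to obtain it. Worse, the route you sketch --- a small, explicitly structured base tensor whose rank is rigid under slice corrections from prescribed subspaces --- is exactly the kind of rigidity statement that is notoriously hard to prove for explicit tensors; there is no reason to believe it goes through for a ``moderate size'' $S$.

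The paper sidesteps this by going generic and large rather than explicit and small. It sets $T=\sum_{\alpha=1}^{r(n)}(x^\alpha,\xi^\alpha)\otimes(y^\alpha,\gamma^\alpha)\otimes(z^\alpha,\zeta^\alpha)$ with $r(n)=\lfloor 0.34\,n^2\rfloor$ and all $6n\,r(n)$ coordinates independent transcendentals, so each rank-one term straddles both diagonal blocks and $\rank_\F T\leqslant r(n)$ is free. The lower bound is then a transcendence-degree count: any $\Phi\in T_{111}\,\mathrm{mod}\,(U_1,U_2,U_3)$ retains $n^3-O(n^{2.5})$ degrees of freedom over the field $\mathbb{K}$ generated by the correction coefficients and the off-diagonal variables (Proposition~\ref{prcl22}, proved via a Jacobian criterion for algebraic independence and Lickteig's theorem on typical rank), which forces $\rank_{\overline{\F}}\Phi\geqslant n^2/3-o(n^2)>r(n)/2$; quantifier elimination then specializes the generic parameters into $\F$. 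If you want to salvage your approach, you would need either to supply a concrete $S$ together with a provable rigidity bound, or to adopt some analogue of this genericity-plus-counting mechanism; as written, the argument has no content beyond the (correct) observation that cross-block rank-one terms are the source of the savings.
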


\begin{thr}\label{thrmain}
Claims~\ref{claimmain1} and~\ref{claimmain2} imply Theorem~\ref{thrmain2}.
\end{thr}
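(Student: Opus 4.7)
The plan is to assemble the two claims as follows. First, apply Claim~\ref{claimmain2} to produce a tensor $T \in \F^{(I_1\cup I_2)\times(J_1\cup J_2)\times(K_1\cup K_2)}$ such that $\rank_\F T < A + B$, where $A = \min\rank_{\overline{\F}} T_{111}\,\mathrm{mod}(U_1,U_2,U_3)$ and $B = \min\rank_{\overline{\F}} T_{222}\,\mathrm{mod}(V_1,V_2,V_3)$. Since the $U_\delta$ and $V_\delta$ are spanned over $\overline{\F}$ by slices of blocks of $T$ having entries in $\F$, they meet the $\F$-basis hypothesis of Claim~\ref{claimmain1}. Apply Claim~\ref{claimmain1} twice, once with input $(T_{111};U_1,U_2,U_3)$ and once with $(T_{222};V_1,V_2,V_3)$, using a common finite set $\Sigma$ (arranged by enlarging if necessary). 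This yields rank-one families $\M_1,\M_2,\M_3$ and $\M'_1,\M'_2,\M'_3$ together with tensors $T_1 = \mathcal{T}_{111}$ and $T_2 = \mathcal{T}_{222}$ satisfying $\rank_{\overline{\F}} T_1 = A + C$ and $\rank_{\overline{\F}} T_2 = B + D$, where $C = \sum_\delta \dim\operatorname{span}\M_\delta$ and $D = \sum_\delta \dim\operatorname{span}\M'_\delta$. Theorem~\ref{thrmain2} then reduces to establishing the inequality $\rank_\F(T_1\oplus T_2) \leq \rank_\F T + C + D$, because combining this with $\rank_\F T < A + B$ gives $\rank_\F(T_1\oplus T_2) < A + B + C + D = \rank_{\overline{\F}} T_1 + \rank_{\overline{\F}} T_2$.

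For this inequality, the first step is to observe that $T_1\oplus T_2$ is precisely the tensor obtained by adjoining the families $(\M_1\cup\M'_1,\M_2\cup\M'_2,\M_3\cup\M'_3)$ to the direct sum $T_{111,\Sigma}\oplus T_{222,\Sigma}$. This follows from a direct check of definitions: since $\M_\delta$ and $\M'_\delta$ live on disjoint slice supports, each adjoined slice extends by zero to the ambient indexing, reproducing the direct-sum structure. All adjoined matrices being rank-one, Lemma~\ref{lemcompl5} over $\F$ in its equality form applies to give
$$\rank_\F(T_1\oplus T_2) = \min\rank_\F(T_{111,\Sigma}\oplus T_{222,\Sigma})\,\mathrm{mod}(V_1^\oplus,V_2^\oplus,V_3^\oplus) + C + D,$$
where $V_\delta^\oplus = \operatorname{span}_\F\M_\delta + \operatorname{span}_\F\M'_\delta$. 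The task is therefore further reduced to showing that this minimum is at most $\rank_\F T$.

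The key observation is that $T_\Sigma$ itself lies in the equivalence class $(T_{111,\Sigma}\oplus T_{222,\Sigma})\,\mathrm{mod}(V_1^\oplus,V_2^\oplus,V_3^\oplus)$, and since $\rank_\F T_\Sigma = \rank_\F T$, this finishes the proof. The difference $T_\Sigma - (T_{111,\Sigma}\oplus T_{222,\Sigma})$ is the sum of the six cross blocks of $T_\Sigma$, and Claim~\ref{claimmain2} assigns to each such block a direction in which its slices span the $\Sigma$-clone of exactly one of $U_1,U_2,U_3,V_1,V_2,V_3$. For instance, the $1$-slices of $T_{211,\Sigma}$ lie in the $\Sigma$-clone of $U_1$, which by Claim~\ref{claimmain1}(1) is contained in $\operatorname{span}_{\overline{\F}}\M_1$; as these slices are $\F$-valued and $\M_1$ has entries in $\F$, they actually lie in $\operatorname{span}_\F\M_1 \subseteq V_1^\oplus$, so adding this cross block is a legal $1$-slice modification. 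The analogous argument for each remaining cross block (paired by direction: $T_{122,\Sigma}$ with $\M'_1$, $T_{121,\Sigma}$ with $\M_2$, $T_{212,\Sigma}$ with $\M'_2$, $T_{112,\Sigma}$ with $\M_3$, $T_{221,\Sigma}$ with $\M'_3$) shows that $T_\Sigma$ is reached from $T_{111,\Sigma}\oplus T_{222,\Sigma}$ through the three rounds of slice modifications permitted by $\mathrm{mod}(V_1^\oplus,V_2^\oplus,V_3^\oplus)$.

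The main obstacle in this argument is organizational rather than conceptual: it is the bookkeeping required to verify that the six cross blocks pair off correctly with the six families $\M_\delta,\M'_\delta$ under the precise direction assignments built into Claim~\ref{claimmain2}, together with the routine linear-algebra verification that an $\overline{\F}$-span inclusion descends to an $\F$-span inclusion when all ambient data is $\F$-valued.
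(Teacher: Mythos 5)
Your proposal is correct and follows essentially the same route as the paper: apply Claim~\ref{claimmain1} to $T_{111}$ and $T_{222}$ with a common $\Sigma$, use the equality case of Lemma~\ref{lemcompl5} to rewrite $\rank_\F(\mathcal{T}_1\oplus\mathcal{T}_2)$ as a minimum over the mod-class plus the adjoined dimensions, and then observe via Claim~\ref{claimmain1}(1) that the $\Sigma$-clone of $T$ lies in that mod-class, so the minimum is at most $\rank_\F T$, which Claim~\ref{claimmain2} bounds. Your explicit remark that the $\overline{\F}$-span inclusion descends to an $\F$-span inclusion is a point the paper leaves implicit, but otherwise the two arguments coincide.
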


\begin{proof}
We set $\mathcal{T}_1$ and $M_\delta^1$ to be the tensor and matrix sets obtained by the application of Claim~\ref{claimmain1} to the tensor $T_{111}$ and linear spaces $U_1,U_2,U_3$ as in Claim~\ref{claimmain2}. We define $\mathcal{T}_2$ and $M_\delta^2$ similarly but with $T_{222},V_1,V_2,V_3$ taken instead of $T_{111},U_1,U_2,U_3$, and we assume without loss of generality that the sets $\Sigma$ arisen from the applications of Claim~\ref{claimmain1} to $T_{111}$ and $T_{222}$ are equal. We write $\rho=\rank_{\F}(\mathcal{T}_1\oplus\mathcal{T}_2)$, and we denote by $\mathcal{W}_\delta$ the ${\F}$-linear space spanned by the set of all matrices equivalent to those in $M_\delta^1$ and $M_\delta^2$ of sizes corresponding to the $\delta$-slices of $\mathcal{T}_1\oplus\mathcal{T}_2$.

We prove Theorem~\ref{thrmain2} by checking that $\rho<\rank_{\overline{\F}} \mathcal{T}_1+\rank_{\overline{\F}} \mathcal{T}_2$, or
\begin{equation}\label{eq10}\rho-D<\min\rank_{\overline{\F}} T_{111}\,\mathrm{mod}\,(U_1,U_2,U_3)+\min\rank_{\overline{\F}} T_{222}\,\mathrm{mod}\,(V_1,V_2,V_3),\end{equation}
where $D=\sum_{\delta=1}^3\dim\mathcal{W}_\delta$. Since $\mathcal{T}_1\oplus\mathcal{T}_2$ is obtained from $T_{\Sigma1}\oplus T_{\Sigma2}$  by adjoining the bases of linear spaces $(\mathcal{W}_1,\mathcal{W}_2,\mathcal{W}_3)$, the left-hand side of~\eqref{eq10} equals \begin{equation}\label{eq11}\min\rank_\F(T_{\Sigma1}\oplus T_{\Sigma2})\,\mathrm{mod}\,(\mathcal{W}_1,\mathcal{W}_2,\mathcal{W}_3)\end{equation}
by Lemma~\ref{lemcompl5} (where $T_{\Sigma\chi}$ denotes the $\Sigma$-clone of $T_{\chi\chi\chi}$).

According to the item (1) of Claim~\ref{claimmain1}, the matrices equivalent to the $\Sigma$-clones of the $1$-slices of $T_{122}$ and $T_{211}$ belong to $\mathcal{W}_1$, the matrices equivalent to the $\Sigma$-clones of the $2$-slices of $T_{121}$ and $T_{212}$ belong to $\mathcal{W}_2$, and the matrices equivalent to the $\Sigma$-clones of the $3$-slices of $T_{112}$ and $T_{221}$ belong to $\mathcal{W}_3$, so we see that the $\Sigma$-clone of $T$ belongs to $(T_{\Sigma1}\oplus T_{\Sigma2})\,\mathrm{mod}\,(\mathcal{W}_1,\mathcal{W}_2,\mathcal{W}_3)$. In other words, the value~\eqref{eq11} does not exceed $\rank_\F\, T$, and since this value equals $\rho-D$, we complete the proof of~\eqref{eq10} by applying the conclusion of Claim~\ref{claimmain2}.
\end{proof}


Now we need to check the validity of Claims~\ref{claimmain1} and~\ref{claimmain2}; in the following section, we prove the former of them by developing the construction as in~\cite{myComon}. In Section~3, we prove Claim~\ref{claimmain2} by using results on ranks of generic tensors and sufficient conditions of algebraic independence in fields.

\section{The proof of Claim~\ref{claimmain1}}

In this section, we follow the notation of Claim~\ref{claimmain1} and work over a field $\F$.
Let $T\in\F^{I\times J\times K}$ be a tensor as in Claim~\ref{claimmain1}; we begin by constructing the corresponding tensor $\mathcal{T}$ in the special case when $W_1=W_2=0$ and $W_3$ is the $1$-dimensional subspace spanned by a rank-$r$ matrix $W$ which has ones at the positions $(a_1,b_1),\ldots,(a_r,b_r)\in I\times J$ and zeros everywhere else. We set $\rho=2|I\times J\times K|+1$, $\theta=\lceil\log_2r\rceil$, and $\sigma=2\rho^2 r$. We define the trivial partition of a $\sigma\times\sigma$ matrix into $\sigma^2$ submatrices of size $1\times 1$ as follows. Any number $s\in\{0,\ldots,\sigma^2-1\}$ can be written as $s=u_{1s}\sigma+u_{2s}$ with $u_{1s},u_{2s}\in\{0,\ldots,\sigma-1\}$. We will write $u(1,s)=\{u_{1s}\}$, $u(2,s)=\{u_{2s}\}$. Clearly, the $\sigma^2$ sets $u(1,t)\times u(2,t)$ are a trivial partition of a $\sigma\times\sigma$ matrix. We proceed with the definition of the sets $\Sigma$ and $\M_\delta$ as in Claim~\ref{claimmain1}.


\begin{defn}\label{defmd}
We set $\Sigma=\{0,\ldots,\sigma-1\}^\theta$; we set $\pi^i$ to be a function from $\{1,\ldots,\theta\}$ to $\{1,2\}$ and assume that these functions are pairwise different for $i\in\{1,\ldots, r\}$. For any function $\Phi$ from $\{1,\ldots,\theta\}$ to $\{0,\ldots,\sigma^2-1\}$, we define the $0$-$1$ matrix $M_\Phi\in\F^{(I\times\Sigma)\times (J\times\Sigma)}$ with $1$-support equal to
$$\bigcup\limits_{i=1}^r \{a_i\}\times u(\pi^i_1,\Phi_1)\times\ldots\times u(\pi^i_\theta,\Phi_\theta),$$
and $2$-support equal to
$$\bigcup\limits_{j=1}^r \{b_j\}\times u(3-\pi^j_1,\Phi_1)\times\ldots\times u(3-\pi^j_\theta,\Phi_\theta).$$
We call an $(a_i\times\Sigma)\times(b_j\times\Sigma)$ block of $M_\Phi$ diagonal if $i=j$.
We define $\M$ as the set of all such $M_\Phi$ and those matrix units that correspond to entries which are non-zero in at least one of the non-diagonal blocks in any of the $M_\Phi$. 
\end{defn}

\begin{obs}\label{obssum}
The diagonal blocks of $\sum_\Phi M_\Phi$ are matrices of all ones.
\end{obs}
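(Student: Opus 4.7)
The plan is a direct unpacking of Definition~\ref{defmd}. First, because each set $u(\chi,\Phi_t)$ is a singleton, the $1$-support of $M_\Phi$ contains exactly one point in every ``row'' $\{a_i\}\times\Sigma$, namely $(a_i,\alpha^\Phi_i)$, where $\alpha^\Phi_i\in\Sigma$ has $t$-th coordinate $u_{\pi^i_t,\Phi_t}$; likewise the $2$-support contains exactly one point $(b_j,\beta^\Phi_j)$ in every ``column'' $\{b_j\}\times\Sigma$, with $(\beta^\Phi_j)_t=u_{3-\pi^j_t,\Phi_t}$. Reading $M_\Phi$ as the $0$-$1$ indicator of the rectangle formed by its $1$- and $2$-supports (which also makes it rank one, as needed for later application of Lemma~\ref{lemcompl5}), the nonzero entries of $M_\Phi$ lie in a definite $r\times r$ subrectangle of $\F^{(I\times\Sigma)\times(J\times\Sigma)}$.

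Next I would invoke the hypothesis that $W$ is a $0$-$1$ matrix of rank $r$ with exactly $r$ nonzero entries. This forces the $a_i$'s and the $b_i$'s to be pairwise distinct, because otherwise $W$ would have fewer than $r$ nonzero rows (respectively, columns), contradicting $\rank W=r$. As a consequence, the diagonal block $D_i:=(\{a_i\}\times\Sigma)\times(\{b_i\}\times\Sigma)$ of a single $M_\Phi$ picks up a contribution only from the $i$-th summand of each support, and hence carries a single $1$, located at position $(\alpha^\Phi_i,\beta^\Phi_i)$, with zeros elsewhere.

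It then remains to verify that the map $\Phi\mapsto(\alpha^\Phi_i,\beta^\Phi_i)$ is a bijection from $\{0,\ldots,\sigma^2-1\}^\theta$ onto $\Sigma\times\Sigma$. Both sets have size $\sigma^{2\theta}$, so surjectivity suffices. For any target $(\tau_1,\tau_2)$ and any coordinate $t$, the constraints $u_{\pi^i_t,\Phi_t}=(\tau_1)_t$ and $u_{3-\pi^i_t,\Phi_t}=(\tau_2)_t$ together determine both $u_{1,\Phi_t}$ and $u_{2,\Phi_t}$, and hence pin down $\Phi_t=u_{1,\Phi_t}\sigma+u_{2,\Phi_t}$ uniquely; thus a unique $\Phi$ maps to $(\tau_1,\tau_2)$. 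Summing the contributions over $\Phi$ deposits a $1$ at every entry of $D_i$, which is the claim.

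I do not expect a real obstacle: the observation is a bookkeeping identity made possible by the product structure of $\Sigma=\{0,\ldots,\sigma-1\}^\theta$ together with the two-coordinate encoding $s\mapsto(u_{1,s},u_{2,s})$; the only ``global'' input required is the distinctness of the $a_i$'s and of the $b_i$'s, which cleanly separates the diagonal blocks $D_i$ from the off-diagonal ones collected into the matrix units adjoined to $\M$.
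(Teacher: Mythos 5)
Your proof is correct and takes essentially the same route as the paper's, whose one-line argument (a product of partitions of a set $S$ is a partition of $S^q$) is exactly the fact your explicit bijection $\Phi\mapsto(\alpha^\Phi_i,\beta^\Phi_i)$ unpacks in the singleton-cell case. You additionally make explicit the point, implicit in the paper, that $\operatorname{rank}W=r$ forces the $a_i$'s and the $b_i$'s to be pairwise distinct, so each $M_\Phi$ contributes exactly one $1$ to each diagonal block.
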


\begin{proof}
If $\{S_1^1,\ldots,S_k^1\}$, $\ldots\,$, $\{S_1^q,\ldots,S_k^q\}$ are partitions of a set $S$, then $\{S_1^1,\ldots,S_k^1\}\times\ldots\times\{S_1^q,\ldots,S_k^q\}$ is a partition of $S^q$.
\end{proof}

\begin{obs}\label{obsspan}
The $\Sigma$-clone of $W$ lies in the span of $\M$.
\end{obs}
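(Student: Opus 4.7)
The plan is to realise $W_\Sigma$, the $\Sigma$-clone of $W$, as an explicit $\F$-linear combination of the elements of $\M$. Since $W$ is $0$-$1$ with ones at the $r$ positions $(a_k,b_k)$ and has rank $r$, the rows $a_1,\ldots,a_r$ are distinct and so are the columns $b_1,\ldots,b_r$; consequently $W_\Sigma$ is the matrix whose value is $1$ at every position of each diagonal block $\{a_i\}\times\Sigma\times\{b_i\}\times\Sigma$ and $0$ everywhere else. The natural candidate to produce this pattern is the sum $S:=\sum_\Phi M_\Phi$, because Observation~\ref{obssum} already tells us that the diagonal blocks of $S$ are matrices of all ones, so $S$ and $W_\Sigma$ agree on the diagonal part and the entire task reduces to cancelling the off-diagonal part of $S$.

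For the cancellation step I would fix an arbitrary off-diagonal position $(x,y)\in\{a_i\}\times\Sigma\times\{b_j\}\times\Sigma$ with $i\ne j$. By construction $S(x,y)$ is simply the number of $\Phi$'s with $M_\Phi(x,y)=1$, so $S(x,y)\ne 0$ forces $(x,y)$ to be a $1$-entry of some $M_\Phi$ sitting inside a non-diagonal block, and Definition~\ref{defmd} then guarantees that the matrix unit $E_{xy}$ belongs to $\M$. It follows that the combination
\[
S-\sum_{(x,y)\text{ off-diagonal}} S(x,y)\,E_{xy}
\]
is an $\F$-linear combination of elements of $\M$ and, block by block, matches $W_\Sigma$. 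I do not expect a serious obstacle here: everything is bookkeeping on the supports described in Definition~\ref{defmd}, and the second batch of elements of $\M$ was placed there precisely so that this cancellation can be carried out. The only spot worth double-checking is the claim that the $a_i$'s and $b_i$'s are distinct, which is used to conclude that $W_\Sigma$ vanishes off the diagonal blocks.
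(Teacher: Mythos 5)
Your proposal is correct and follows essentially the same route as the paper: sum all the $M_\Phi$ to get all-ones diagonal blocks (Observation~\ref{obssum}) and then subtract the matrix units that Definition~\ref{defmd} placed in $\M$ precisely to kill the off-diagonal entries. Your extra check that the $a_i$'s and $b_i$'s are distinct (so that $W_\Sigma$ is supported exactly on the diagonal blocks) is a valid and harmless addition, guaranteed by $\rank W=r$.
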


\begin{proof}
Follows from Observation~\ref{obssum} because we can turn the entries in the non-diagonal blocks to zeros thanks to the matrix units as in Definition~\ref{defmd}.
\end{proof}

Let us look at the structure of linear combinations of matrices in $\M$.

\begin{lem}\label{lemdiag}
For any $M\in\operatorname{span}\M$ and $i\neq j$, there is a permutation of rows and columns that sends the submatrix $M(a_i\times\Sigma|b_j\times\Sigma)$ to a $\sigma\times\sigma$ block-diagonal matrix with blocks of size $\sigma^{\theta-1}\times\sigma^{\theta-1}$.
\end{lem}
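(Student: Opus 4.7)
The plan is to exploit the fact that the functions $\pi^i$ and $\pi^j$ are distinct in order to pin down a single coordinate on which the row and column indices must agree in every non-diagonal block. Since the functions $\pi^1,\ldots,\pi^r$ are pairwise different, I can fix $t_0\in\{1,\ldots,\theta\}$ with $\pi^i_{t_0}\neq\pi^j_{t_0}$, and then $\pi^i_{t_0}=3-\pi^j_{t_0}$ because both values lie in $\{1,2\}$.

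Next, I would unpack what happens inside a single $M_\Phi$ on the $(a_i\times\Sigma)\times(b_j\times\Sigma)$ block. A position $((a_i,s),(b_j,s'))$ is non-zero in $M_\Phi$ precisely when $(a_i,s)$ is in the $1$-support, i.e.\ $s_t\in u(\pi^i_t,\Phi_t)$ for every $t$, and $(b_j,s')$ is in the $2$-support, i.e.\ $s'_t\in u(3-\pi^j_t,\Phi_t)$ for every $t$. Reading both conditions at the coordinate $t_0$, they force $s_{t_0}$ and $s'_{t_0}$ to lie in the same single-element set $u(\pi^i_{t_0},\Phi_{t_0})=u(3-\pi^j_{t_0},\Phi_{t_0})$, so $s_{t_0}=s'_{t_0}$. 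Thus every non-zero entry of every $M_\Phi$ in the block in question satisfies $s_{t_0}=s'_{t_0}$.

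Because the matrix units adjoined to $\M$ are supported at positions that are non-zero in some non-diagonal block of some $M_\Phi$, those matrix units relevant to the $(a_i\times\Sigma)\times(b_j\times\Sigma)$ block also satisfy $s_{t_0}=s'_{t_0}$. By linearity, any $M\in\operatorname{span}\M$ has support on this block contained in $\{((a_i,s),(b_j,s')):s_{t_0}=s'_{t_0}\}$. To finish, I would reorder rows of the block by treating $s_{t_0}$ as the highest-order index and the remaining $\theta-1$ coordinates of $s$ as lower-order, and reorder columns the same way with $s'_{t_0}$. This produces a $\sigma\times\sigma$ block partition into $\sigma^{\theta-1}\times\sigma^{\theta-1}$ blocks in which only the $\sigma$ diagonal blocks, corresponding to $s_{t_0}=s'_{t_0}$, can be non-zero, exactly as required. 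The whole argument is a short coordinate computation, so I do not expect a substantial obstacle; the only conceptual content is the choice of $t_0$ which uses that $\pi^i\neq\pi^j$.
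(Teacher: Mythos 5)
Your proof is correct and follows essentially the same route as the paper: pick a coordinate $\tau=t_0$ where $\pi^i_\tau=3-\pi^j_\tau$, observe that every non-zero entry of the block (in any $M_\Phi$, hence also in the adjoined matrix units and in any linear combination) must have equal $\tau$th $\Sigma$-coordinates on the row and column sides, and sort by that coordinate. Your explicit handling of the matrix units is a detail the paper leaves implicit, but the argument is the same.
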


\begin{proof}
Since the functions $\pi^i$ and $\pi^j$ as in Definition~\ref{defmd} are different, we have $\pi^i_\tau=3-\pi^j_\tau$ for some $\tau\in\{1,\ldots,\theta\}$. Therefore, the $\tau$th element in the tuple of $I$-coordinates of any non-zero entry of $M(a_i\times\Sigma|b_j\times\Sigma)$ should coincide with the $\tau$th element in the tuple of its $J$-coordinates. This $\tau$th element can take $\sigma$ different values, which determine a desired partition of the indexing set.
\end{proof}

\begin{obs}\label{obstriv}
Let $S_1,\ldots,S_r$ be disjoint sets of cardinality $c$, and let their union be partitioned into $c$ sets $D_1,\ldots,D_c$ each of which has exactly one element in every of the $S_i$'s. Let us assume that a subset of less than $c/r$ elements was removed from every $S_i$. Then there is a $\delta$ such that none of the elements of $D_\delta$ were removed.
\end{obs}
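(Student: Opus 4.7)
The plan is a straightforward counting/pigeonhole argument, so I expect no real obstacle: the bound on the number of removed elements per $S_i$ is chosen precisely so that the total does not exceed the number of parts $D_\delta$.

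First I would count the total number of removed elements. By hypothesis, fewer than $c/r$ elements are removed from each of the $r$ sets $S_i$, so the total number of removed elements across the whole union $S_1\cup\cdots\cup S_r$ is strictly less than $r\cdot(c/r)=c$.

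Next I would use the partition structure. Every element of $S_1\cup\cdots\cup S_r$ belongs to exactly one of the parts $D_1,\ldots,D_c$, so the map sending a removed element to the index $\delta$ of the unique $D_\delta$ containing it is well defined. The number of indices $\delta$ for which at least one element of $D_\delta$ has been removed is therefore bounded above by the total number of removed elements, which is strictly less than $c$.

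Finally, since there are exactly $c$ parts $D_1,\ldots,D_c$, the pigeonhole principle guarantees the existence of some $\delta\in\{1,\ldots,c\}$ such that no element of $D_\delta$ was removed, as required.
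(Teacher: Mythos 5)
Your counting argument is correct and is exactly the pigeonhole reasoning the paper has in mind when it dismisses this observation with the one-word proof ``Trivial'': fewer than $c$ elements are removed in total, each removed element spoils at most one $D_\delta$, and there are $c$ parts. Nothing to add.
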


\begin{proof}
Trivial.
\end{proof}

\begin{lem}\label{lemlemtbm}
Let $\M_0$ be a set of $|K|$ elements in $\operatorname{span}\M$. Then either

\noindent  (1) $\M_0$ contains a matrix of rank at least $\rho$;

\noindent (2) there are $\varphi_1,\ldots,\varphi_r,\psi_1,\ldots,\psi_r\in\Sigma$ such that, for every $M\in\M_0$, the matrix $M(a_1\times\varphi_1,\ldots,a_r\times\varphi_r|b_1\times\psi_1,\ldots,b_r\times\psi_r)$ is a multiple of $W$.
\end{lem}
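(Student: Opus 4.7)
The plan is to prove the contrapositive: assuming every $M\in\M_0$ has rank at most $\rho-1$, I will exhibit $\varphi_i,\psi_i\in\Sigma$ witnessing~(2). I parametrise candidate choices by functions $\Phi\colon\{1,\dots,\theta\}\to\{0,\dots,\sigma^2-1\}$, setting $\varphi_i^\Phi(\tau)=u(\pi^i_\tau,\Phi_\tau)$ and $\psi_i^\Phi(\tau)=u(3-\pi^i_\tau,\Phi_\tau)$. Every $M\in\operatorname{span}\M$ decomposes as $\sum_\Phi c_\Phi M_\Phi+N$, where $N$ is a linear combination of the matrix units introduced in Definition~\ref{defmd}, all of which are supported in non-diagonal blocks.

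The equal-diagonal requirement is then automatic for every $\Phi$. Since each $u(\cdot,\Phi_\tau)$ is a singleton, Definition~\ref{defmd} shows that the $(a_i\times\Sigma)\times(b_i\times\Sigma)$ block of $M_\Phi$ has a single nonzero entry, equal to $1$, located at $(\varphi_i^\Phi,\psi_i^\Phi)$; moreover different $\Phi$'s place their respective entries at different positions, because the pair $(\varphi_i^\Phi(\tau),\psi_i^\Phi(\tau))$ jointly determines both $u_{1\Phi_\tau}$ and $u_{2\Phi_\tau}$, hence $\Phi_\tau$. Since $N$ contributes nothing on diagonal blocks, one gets $M(a_i\times\varphi_i^\Phi|b_i\times\psi_i^\Phi)=c_\Phi$ for every $i$, and so $\lambda_M=c_\Phi$ as soon as the off-diagonal entries are also killed.

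The heart of the argument is to find a $\Phi$ that does kill all off-diagonal entries. Set $T_{ij}=\{\tau:\pi^i_\tau\neq\pi^j_\tau\}$, which is nonempty for $i\neq j$. Inspection of Definition~\ref{defmd} shows that the support of $M(a_i\times\Sigma|b_j\times\Sigma)$ is contained in $\{(\vec x,\vec y):\vec x_\tau=\vec y_\tau\text{ for all }\tau\in T_{ij}\}$, and Lemma~\ref{lemdiag} (applied with any single $\tau\in T_{ij}$) puts this block into block-diagonal form with $\sigma$ sub-blocks of size $\sigma^{\theta-1}$. The rank bound $\rank M\leq\rho-1$ forces at most $\rho-1$ of these sub-blocks to be nonzero; combined with the $|T_{ij}|-1$ remaining support constraints from $T_{ij}\setminus\{\tau\}$, this yields at most $(\rho-1)\sigma^{2\theta-1-|T_{ij}|}$ nonzero entries in the off-diagonal block. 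The map $\Phi\mapsto(\varphi_i^\Phi,\psi_j^\Phi)$ has fibres of size exactly $\sigma^{|T_{ij}|}$ over its image, so each triple $(M,i,j)$ with $i\neq j$ rules out at most $(\rho-1)\sigma^{2\theta-1}$ of the $\sigma^{2\theta}$ candidate $\Phi$'s.

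Summing over the $|K|r(r-1)$ such triples bounds the total number of bad $\Phi$'s by $|K|r(r-1)(\rho-1)\sigma^{2\theta-1}$, a fraction $|K|r(r-1)(\rho-1)/\sigma$ of the total. Since $\sigma=2\rho^2 r$ and $r\leq\min(|I|,|J|)$ force $\rho>2|K|r$, this fraction is strictly smaller than $1$, so a good $\Phi$ exists and its $\varphi_i^\Phi,\psi_i^\Phi$ realise~(2). The main obstacle in carrying out the proof is keeping the fibre structure and cascading support constraints straight; once one notices that the $\Phi$-parametrisation trivialises the equal-diagonal half of~(2), only the off-diagonal union-bound count remains, and the polynomial blow-up built into $\sigma=2\rho^2 r$ is comfortably large enough to absorb it.
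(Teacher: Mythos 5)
Your proof is correct and follows essentially the same route as the paper's: both use Lemma~\ref{lemdiag} to bound the nonzero part of each off-diagonal block of a rank-$<\rho$ matrix and then run a union bound to find a $\Phi$ whose entries $(\varphi_i^\Phi,\psi_i^\Phi)$ avoid all off-diagonal support, with the equal-diagonal condition coming for free from the singleton support of the diagonal blocks of each $M_\Phi$. The only cosmetic difference is that you count bad functions $\Phi$ directly via the fibres of $\Phi\mapsto(\varphi_i^\Phi,\psi_j^\Phi)$, whereas the paper counts bad rows and then invokes Observation~\ref{obstriv}.
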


\begin{proof}
Assuming that (1) is false, we apply Lemma~\ref{lemdiag} to a non-diagonal block $(a_i\times\Sigma|b_j\times\Sigma)$ of a matrix in $\M_0$, and we conclude that at most $\rho\sigma^{\theta-1}$ rows are non-zero in this block. For every fixed $i$, we get an upper bound of $r\rho\sigma^{\theta-1}|K|<\rho^2\sigma^{\theta-1}<|\Sigma|/(2r)$ for the quantity of those elements $\xi\in a_i\times\Sigma$ such that the $\xi$th row is non-zero in at least one of the non-diagonal $(a_i,b_j)$ blocks in at least one of the matrices in $\M_0$. In other words, all the matrices obtained from those in $\M_0$ by removing at most $|\Sigma|/(2r)$ elements from every $a_i\times\Sigma$ will have zero non-diagonal blocks. It remains to find $\varphi_1,\ldots,\varphi_r,\psi_1,\ldots,\psi_r\in\Sigma$ such that the entries $(a_i\times\varphi_i,b_i\times\psi_i)$ are non-zero in the same $M_\Phi$ and do not appear in the removed rows; this is possible by Observation~\ref{obstriv}.
\end{proof}

\begin{cor}\label{corcl1} 
$\min\rank_{\overline{\F}}\,T_\Sigma\,\mathrm{mod}(0,0,\operatorname{span}\M)=\min\rank_{\overline{\F}}\,T\,\mathrm{mod}(0,0,W)$.
\end{cor}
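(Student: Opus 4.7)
The corollary is the conjunction of two inequalities, which I will prove separately.

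For the upper bound, given any $T + \sum_k c_k W \otimes e_k$ of rank $s$ (with $c_k \in \overline{\F}$ and $e_k \in \overline{\F}^K$ the coordinate vector at $k$), its $\Sigma$-clone has the same rank and equals $T_\Sigma + \sum_k c_k W_\Sigma \otimes e'_k$, where $e'_k = \sum_{s_3 \in \Sigma} e_{(s_3,k)}$. Since $W_\Sigma \in \operatorname{span}\M$ by Observation~\ref{obsspan}, this clone lies in $T_\Sigma\,\mathrm{mod}(0,0,\operatorname{span}\M)$, which gives the inequality $\min\rank_{\overline{\F}} T_\Sigma\,\mathrm{mod}(0,0,\operatorname{span}\M) \leq \min\rank_{\overline{\F}} T\,\mathrm{mod}(0,0,W)$.

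For the reverse inequality, consider $T_\Sigma + N$ of rank $s$, where the $(s_3,k)$th $3$-slice $N_{(s_3,k)}$ of $N$ lies in $\operatorname{span}\M$. Fix any $s_3^* \in \Sigma$ and apply Lemma~\ref{lemlemtbm} to the $|K|$ matrices $\{N_{(s_3^*,k)} : k \in K\}$. If case (1) holds, i.e., some $N_{(s_3^*,k)}$ has rank $\geq \rho$, then the $3$-slice $(T_k)_\Sigma + N_{(s_3^*,k)}$ of $T_\Sigma + N$ has rank at least $\rho - \min(|I|,|J|) > |I|\cdot|J|\cdot|K|$. Since any $3$-slice's rank bounds the tensor's rank, $s > |I\cdot J\cdot K| \geq \rank_{\overline{\F}} T \geq \min\rank_{\overline{\F}} T\,\mathrm{mod}(0,0,W)$, and the inequality is trivial in this case.

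Otherwise case (2) yields $\varphi_1,\ldots,\varphi_r,\psi_1,\ldots,\psi_r \in \Sigma$ and scalars $d_k \in \overline{\F}$ with $N_{(s_3^*,k)}(a_i\times\varphi_i \mid b_j\times\psi_j) = d_k\delta_{ij}$ for every $k \in K$. I would restrict $T_\Sigma + N$ to $1$-indices $I_0 = \{(a_i,\varphi_i)\}_{i=1}^r \cup \{(a,0) : a \in I \setminus \{a_1,\ldots,a_r\}\}$, $2$-indices $J_0$ defined analogously, and $3$-indices $\{(k,s_3^*) : k \in K\}$. The key observation, immediate from Definition~\ref{defmd}, is that every matrix in $\operatorname{span}\M$ has its $1$-support in $\{a_1,\ldots,a_r\}\times\Sigma$ and its $2$-support in $\{b_1,\ldots,b_r\}\times\Sigma$. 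Hence, under the natural bijections $I_0 \cong I$ and $J_0 \cong J$, the restricted entry at $(a,b,k)$ equals $T(a,b,k) + d_k W(a,b)$: when $(a,b) = (a_i,b_j)$ this is exactly the conclusion of case (2) combined with $W(a_i,b_j) = \delta_{ij}$, while in the three cases where $a \notin \{a_l\}$ or $b \notin \{b_l\}$ both the $N$-contribution (by the support observation) and $W(a,b)$ vanish. The restriction is therefore $T + \sum_k d_k W \otimes e_k$, and has rank at most that of $T_\Sigma + N$, namely $s$.

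The main subtlety is passing from a rank decomposition of the enlarged tensor $T_\Sigma + N$ back to one of $T + \sum_k d_k W \otimes e_k$ while producing a single scalar multiple $d_k$ of $W$ per $3$-slice. Lemma~\ref{lemlemtbm}(2) handles the $\{a_l\}\times\{b_l\}$ block by forcing the restriction of the perturbation to be a multiple of $W$; the support calculation for $\operatorname{span}\M$ handles everything outside that block; and the size $\rho$ is tuned so that the "bad" case (1) yields a rank so large that the corollary's inequality becomes trivial.
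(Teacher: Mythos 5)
Your proof is correct and follows essentially the same route as the paper: the upper bound via cloning and Observation~\ref{obsspan}, and the lower bound by applying Lemma~\ref{lemlemtbm} to the slices over a fixed copy $s_3^*$ and restricting to one row/column per clone class, with the choice of $\rho$ disposing of the high-rank case. You merely make explicit the support observation and the trivial handling of case (1) that the paper's terse proof leaves implicit.
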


\begin{proof}
The `$\leqslant$' inequality follows from Observation~\ref{obsspan}. In order to prove the `$\geqslant$' part, we note that adding a matrix $M\in\operatorname{span}\M$ to a $3$-slice of $T_\Sigma$ will only increase the rank of the tensor provided that the rank of $M$ is at least $\rho$. Therefore, we can use Lemma~\ref{lemlemtbm} and find, for any tensor $T_0$ of the minimal rank in $T_\Sigma\,\mathrm{mod}(0,0,\operatorname{span}\M)$ and any $i\in I$, $j\in J$, elements $\alpha(i)\in i\times\Sigma$ and $\beta(j)\in j\times\Sigma$ such that the restriction of $T_0$ to $(\alpha(I)|\beta(J)|K)$ belongs to $T\,\mathrm{mod}(0,0,W)$.
\end{proof}

Let us now see how the construction of $\M$ allows us to prove Claim~\ref{claimmain1}. In particular, we can define the tensor $\mathcal{T}$ in the case when $W_1=W_2=0$ and $\dim W_3=1$. In other words, this is the case when $W_3$ is spanned by a single matrix $W'$ which can be written as $PWQ$, where $W$ is the matrix as in the beginning of the section, and $P,Q$ are invertible $I\times I$ and $J\times J$ matrices with entries in $\F$. Namely, we set $\mathcal{T}(T,0,0,W')$ to be the tensor obtained from the $\Sigma$-clone of $T$ by adjoining the matrices $P^\Sigma MQ^\Sigma$ as $3$-slices, where $M$ runs over $\M$ and $P^\Sigma$ stands for the Kronecker product of $P$ and the $\Sigma\times\Sigma$ unity matrix.\footnote{This definition corresponds to writing $T$ with respect to the basis in which $W'$ has the form $W$, then adjoining the matrices in $\M$, and then going back to the initial basis.}

\begin{lem}\label{lemclaim1} 
$\rank_{\overline{\F}}\,\mathcal{T}(T,0,0,W)=\min\rank_{\overline{\F}}\,T\,\mathrm{mod}(0,0,W)+\dim\operatorname{span}\M$.
\end{lem}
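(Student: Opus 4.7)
The plan is to deduce the equality from Lemma~\ref{lemcompl5} and Corollary~\ref{corcl1}, after first establishing that every matrix in $\M$ is rank-one. For the latter, I would check that $M_\Phi$ is rank-one by inspecting its supports: the $1$-support is a union of $r$ singletons, one per $i\in\{1,\ldots,r\}$, giving $r$ rows of $I\times\Sigma$, and the $2$-support is the analogous union of $r$ columns of $J\times\Sigma$. Looking at each block $(a_i\times\Sigma,b_j\times\Sigma)$ I would find exactly one non-zero entry, located at the intersection of the $i$th row of the $1$-support with the $j$th column of the $2$-support, so in total the $r^2$ non-zero entries of $M_\Phi$ fill the entire $r\times r$ rectangle determined by its supports with the value $1$. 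Thus $M_\Phi$ is the outer product of the indicator vectors of its two supports, which has rank one; the remaining elements of $\M$ are matrix units, also rank-one.

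Next, I would pick a subset $\M_3\subseteq\M$ that is a basis of $\operatorname{span}\M$, which exists since $\M$ spans this space, and let $\tilde{\mathcal{T}}$ be the tensor obtained from $T_\Sigma$ by adjoining the matrices in $\M_3$ as $3$-slices. Because $\M_3$ consists of rank-one matrices, the equality clause of Lemma~\ref{lemcompl5} applied with $V_1=V_2=0$ and $V_3=\operatorname{span}\M$ gives
\[\rank_{\overline{\F}}\tilde{\mathcal{T}}=\min\rank_{\overline{\F}}T_\Sigma\,\mathrm{mod}(0,0,\operatorname{span}\M)+\dim\operatorname{span}\M,\]
and Corollary~\ref{corcl1} rewrites the first summand as $\min\rank_{\overline{\F}}T\,\mathrm{mod}(0,0,W)$, producing the desired right-hand side.

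Finally, I would show $\rank_{\overline{\F}}\mathcal{T}(T,0,0,W)=\rank_{\overline{\F}}\tilde{\mathcal{T}}$. The tensor $\mathcal{T}(T,0,0,W)$ is obtained from $\tilde{\mathcal{T}}$ by adjoining the remaining elements of $\M\setminus\M_3$ as further $3$-slices, each of which lies in $\operatorname{span}\M_3$ and is therefore a linear combination of the $3$-slices already present in $\tilde{\mathcal{T}}$. Given any rank-$r$ decomposition $\tilde{\mathcal{T}}=\sum_{\ell}a_\ell\otimes b_\ell\otimes c_\ell$, one extends each $c_\ell$ to the new $3$-indices by the corresponding linear combinations of its existing entries, obtaining a rank-$r$ decomposition of the larger tensor; the reverse inequality is trivial by restriction. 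The main obstacle will be the rank-one verification of $M_\Phi$: the definition prescribes only the supports and the $0$-$1$ condition, so one has to argue carefully that the implied non-zero entries genuinely fill the full $r\times r$ rectangle rather than merely forming some sparser, permutation-like subset that would have higher rank.
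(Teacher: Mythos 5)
Your proof is correct and follows essentially the same route as the paper: combine Corollary~\ref{corcl1} with the rank-one slice-elimination lemma to get the stated equality. The extra steps you supply --- checking that $M_\Phi$ is the outer product of the indicator vectors of its two supports (the intended reading of Definition~\ref{defmd}, and indeed the only one under which the matrix is well defined), and passing to a basis of $\operatorname{span}\M$ before adjoining and then arguing the redundant slices do not raise the rank --- are correct elaborations of details the paper leaves implicit; note only that the paper avoids your last step by citing Lemma~\ref{lemcompl} directly, whose equality clause already tolerates linearly dependent rank-one slices since the additive term is $\dim V$ rather than the number of adjoined slices.
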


\begin{proof}
We replace the first summand in the right-hand side by the left-hand side of the conclusion of Lemma~\ref{corcl1} and apply Lemma~\ref{lemcompl}.
\end{proof}

Now we assume that $(W_1,W_2,W_3)$ are as in Claim~\ref{claimmain1} and $W\in\F^{I\times J}\setminus W_3$; we denote the $\overline{\F}$-linear span of $W_3\cup\{W\}$ by $\mathcal{W}_3$. We set $\mathcal{T}(T,0,0,0)=T$ and recursively define $\mathcal{T}(T,W_1,W_2,\mathcal{W}_3)$ as $\mathcal{T}(\mathcal{T}_0,0,0,\mathcal{W}')$, where $\mathcal{T}_0=\mathcal{T}(T,W_1,W_2,W_3)$ and $\mathcal{W}'$ is the matrix equivalent to the relevant clone of $W$. Observation~\ref{obsspan} and Lemma~\ref{lemclaim1} confirm that $\mathcal{T}(T,W_1,W_2,\mathcal{W})$ is a desired construction for Claim~\ref{claimmain1}.

\section{The proof of Claim~\ref{claimmain2}}

Throughout this section, we assume that $\F$ is an infinite field; we also fix a purely transcendental extension of $\F$ with an infinite basis. The elements of this basis are to be called \textit{variables}, and one may actually think of them as variables changing in $\F$. We denote by $n$ a sufficiently large integer, and we write $r(n)=\lfloor 0.34 n^2\rfloor$. We say that a tensor $T$ with entries in an extension of a field $F$ has $\tau$ \textit{degrees of freedom over} $F$ if the transcendence degree of the extension obtained from $F$ by adjoining the entries of $T$ equals $\tau$. If $\tau$ equals the total number of entries of $T$, then $T$ is called \textit{generic over} $F$. We define the $2n\times 2n\times 2n$ tensor as

$$T=\sum_{\alpha=1}^{r(n)}(x^\alpha,\xi^\alpha)\otimes(y^\alpha,\gamma^\alpha)\otimes(z^\alpha,\zeta^\alpha),$$ where
$x^\alpha,\xi^\alpha,y^\alpha,\gamma^\alpha,z^\alpha,\zeta^\alpha$ are $n$-vectors whose entries are pairwise different variables denoted by $x_i^\alpha,\xi_i^\alpha,y_i^\alpha,\gamma_i^\alpha,z_i^\alpha,\zeta_i^\alpha$---and we denote the algebraic closure of the field obtained from $\F$ by adjoining all these variables by $\Fb$. We define the $n$-element indexing sets $I_1,J_1,K_1$ corresponding to $x_1^\alpha,\ldots,x_n^\alpha$, to $y_1^\alpha,\ldots,y_n^\alpha$, and to $z_1^\alpha,\ldots,z_n^\alpha$, respectively. Similarly, the sets $I_2,J_2,K_2$ correspond to $\xi_1^\alpha,\ldots,\xi_n^\alpha$, to $\gamma_1^\alpha,\ldots,\gamma_n^\alpha$, and to $\zeta_1^\alpha,\ldots,\zeta_n^\alpha$. 
The rank of $T$ does not exceed $r(n)$ for any assignment of values in $\F$ to the variables $x_i^\alpha,\xi_i^\alpha,y_i^\alpha,\gamma_i^\alpha,z_i^\alpha,\zeta_i^\alpha$, so we will conclude the proof of Claim~\ref{claimmain2} if we produce an assignment satisfying
\begin{equation}\label{eqcl2}r(n)<\min\rank_\Fb\,T_{111}\,\mathrm{mod}(U_1,U_2,U_3)+\min\rank_\Fb\,T_{222}\,\mathrm{mod}(V_1,V_2,V_3),\end{equation} where $T_{ijk}$ denotes $T(I_i|J_j|K_k)$, and $U_1$, $U_2$, $U_3$, $V_1$, $V_2$, $V_3$ are the ${\Fb}$-linear spaces spanned, respectively, by the $1$-slices of $T_{211}$, by the $2$-slices of $T_{121}$, by the $3$-slices of $T_{112}$, by the $1$-slices of $T_{122}$, by the $2$-slices of $T_{212}$, and by the $3$-slices of $T_{221}$.

We keep working with generic tensors 
and consider $\Phi\in T_{111}\,\mathrm{mod}\,(U_1,U_2,U_3)$. Every such $\Phi$ can be obtained by adding $\Fb$-linear combinations of the $3$-slices of $T_{112}$ to every of the $3$-slices of $T_{111}$, then $\Fb$-linear combinations of the $2$-slices of $T_{121}$ to the $2$-slices of the resulting tensor, and finally $\Fb$-linear combinations of the $1$-slices of $T_{211}$ to the $1$-slices of what was obtained. Denoting by $A_i^\tau$, $B_j^\tau$, $C_k^\tau$ the coefficients of these linear combinations, we get the expression
$$\varphi_{ijk}=\sum_{\alpha=1}^{r(n)}\left(x_i^\alpha y_j^\alpha z_k^\alpha+a_i^\alpha y_j^\alpha z_k^\alpha+x_i^\alpha b_j^\alpha z_k^\alpha+x_i^\alpha y_j^\alpha c_k^\alpha\right)$$
for the $(i,j,k)$ entry of $\Phi$. Here, $x_i^\alpha$, $y_j^\alpha$, $z_k^\alpha$ are variables defined above, and
$$a_i^\alpha=\sum_{\tau=1}^n A_i^\tau\xi_\tau^\alpha,\,\,\,\,b_j^\alpha=\sum_{\tau=1}^n B_j^\tau\gamma_\tau^\alpha,\,\,\,\,c_k^\alpha=\sum_{\tau=1}^n C_k^\tau\zeta_\tau^\alpha$$
are elements of $\Fb$. We write 
\begin{equation}\label{eqdefk}\mathbb{K}=\F(A_i^\tau,B_i^\tau,C_i^\tau,\xi_i^\alpha,\gamma_i^\alpha,\zeta_i^\alpha),\end{equation} and we are going to prove the following.

\begin{prop}\label{prcl22}
The tensor $\Phi$ has at least $n^3-6\sqrt{3}n^{2.5}$ degrees of freedom over~$\mathbb{K}$.
\end{prop}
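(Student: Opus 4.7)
The plan is to apply the Jacobian criterion for algebraic independence, which reduces the question to a lower bound on the generic rank of the Jacobian $J = (\partial\varphi_{ijk}/\partial v)_{(i,j,k),\,v}$, where $v$ ranges over the variables $x_i^\alpha, y_j^\alpha, z_k^\alpha$, taken over the function field $\mathbb{K}(x,y,z)$. In positive characteristic this would require a separability check, but this is immediate since each $\varphi_{ijk}$ is multilinear of total degree three.

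First I would perform the $\mathbb{K}$-linear substitution $X_i^\alpha = x_i^\alpha + a_i^\alpha$, $Y_j^\alpha = y_j^\alpha + b_j^\alpha$, $Z_k^\alpha = z_k^\alpha + c_k^\alpha$, which preserves trdeg over $\mathbb{K}$ and rewrites
\begin{equation*}
\varphi_{ijk} = \sum_\alpha X_i^\alpha Y_j^\alpha Z_k^\alpha - \sum_\alpha X_i^\alpha b_j^\alpha c_k^\alpha - \sum_\alpha a_i^\alpha Y_j^\alpha c_k^\alpha - \sum_\alpha a_i^\alpha b_j^\alpha Z_k^\alpha + \kappa_{ijk},
\end{equation*}
with $\kappa_{ijk} \in \mathbb{K}$. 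I regard the entries of $\varphi$ as values of a polynomial map $F_{a,b,c}\colon \overline{\mathbb{K}}^{3nr(n)} \to \overline{\mathbb{K}}^{n^3}$ depending on the parameters $(a,b,c) \in \overline{\mathbb{K}}^{3nr(n)}$. At the special value $a=b=c=0$, the map $F_{0,0,0}$ is the standard parametrization of the $r(n)$-th secant variety of the Segre $\mathbb{P}^{n-1}\times\mathbb{P}^{n-1}\times\mathbb{P}^{n-1}$. Since $r(n)=\lfloor 0.34 n^2\rfloor$ exceeds the generic rank $\lceil n^3/(3n-2)\rceil$ of an $n\times n\times n$ tensor for $n$ large (a standard result on generic tensor ranks, due to Lickteig and others), this secant variety fills the ambient space, so $\dim\overline{\operatorname{Im}F_{0,0,0}}=n^3$.

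By upper semi-continuity of fiber dimension, the ``bad locus'' $\{(a,b,c) : \dim \overline{\operatorname{Im} F_{a,b,c}} < n^3 - 6\sqrt{3}n^{2.5}\}$ is a proper closed subvariety of $\overline{\mathbb{K}}^{3nr(n)}$ defined over $\F$. Our specific $(a,b,c) \in \mathbb{K}^{3nr(n)}$ is algebraically independent over $\F$: writing $a = A\xi$ with $A$ a generic invertible $n\times n$ matrix and $\xi$ a generic $n\times r(n)$ matrix, the matrix product map is dominant for $r(n)\geqslant n$ and hence the $\{a_i^\alpha\}$ are algebraically independent over $\F$ (and similarly for $b,c$, using disjoint sets of variables). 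Consequently, this parameter point is a generic point of $\overline{\mathbb{K}}^{3nr(n)}$ over $\F$ and avoids any proper closed subvariety defined over $\F$, giving $\dim\overline{\operatorname{Im} F_{a,b,c}} \geqslant n^3 - 6\sqrt{3}n^{2.5}$ (in fact equality $n^3$). The main obstacle is verifying that the ``bad locus'' is indeed defined over $\F$, and citing the generic tensor rank theorem in the required quantitative form. Alternatively, the explicit defect $6\sqrt{3}n^{2.5}$ could be obtained by a direct Jacobian rank computation, tracking the left nullspace equations $(Y^\alpha)^{\top} T_{i\cdot\cdot}Z^\alpha=(b^\alpha)^{\top}T_{i\cdot\cdot}c^\alpha$ (and the two analogues) that any nullspace tensor must satisfy, and bounding the dimension of their joint solution set; the $\sqrt{3}$ factor is suggestive of a Cauchy--Schwarz combination across the three slice directions.
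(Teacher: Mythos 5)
There is a genuine gap, and it sits exactly where the real difficulty of the proposition lies. Your argument treats the coefficients $A_i^\tau,B_j^\tau,C_k^\tau$ (hence $a,b,c$) as generic, but the proposition must hold for \emph{every} $\Phi\in T_{111}\,\mathrm{mod}(U_1,U_2,U_3)$, i.e.\ for an \emph{adversarial} choice of $A,B,C\in\Fb$ (it feeds into a lower bound on a minimum over the coset). Your sentence ``writing $a=A\xi$ with $A$ a generic invertible matrix'' assumes what is not given: $A$ may be singular, may be zero, and --- worse --- may depend algebraically on $\xi,\gamma,\zeta$ and even on $x,y,z$, since its entries are arbitrary elements of $\Fb=\overline{\F(x,\xi,y,\gamma,z,\zeta)}$. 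So $(a,b,c)$ need not avoid your ``bad locus,'' and the semicontinuity argument from the fiber $a=b=c=0$ does not transfer.

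The second, independent problem is that even if the map $F_{a,b,c}$ were dominant, you evaluate it at the point $(X,Y,Z)=(x+a,y+b,z+c)$, and the degrees of freedom of $\Phi$ over $\mathbb{K}$ equal the dimension of the image of the \emph{subvariety} on which $(x,y,z)$ is generic over $\mathbb{K}$ --- not of the whole domain. Because $\mathbb{K}=\F(A,B,C,\xi,\gamma,\zeta)$ may already ``know'' up to $3n^2$ algebraically independent functions of $(x,y,z)$ (again since $A,B,C$ can depend on them), one only gets $\operatorname{trdeg}\mathbb{K}(x,y,z)/\mathbb{K}\geqslant 3nr(n)-3n^2$, not $3nr(n)$. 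Dominance of $F_{a,b,c}$ says nothing about the image of a codimension-$3n^2$ subvariety. The paper's proof is built precisely to absorb these two issues: it counts transcendence degrees to get the $3nr(n)-3n^2$ bound, projects by a generic $(n-d)\times n$ matrix with $d=\lfloor\sqrt{3n}\rfloor$, discards the resulting $O(n^2/d)$ ``weak'' columns so that the surviving $x^\alpha,y^\alpha,z^\alpha$ become jointly generic over $\mathbb{K}$, applies Lickteig's theorem plus the leading-form observation to the surviving Segre summands (the terms involving $a,b,c$ have lower degree and drop out of the initial form), and treats the discarded summands as a low-rank perturbation --- which is exactly where the loss $6\sqrt{3}n^{2.5}$ comes from. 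Your closing suggestion of a ``direct Jacobian rank computation'' gestures at this but is not carried out; as written, the proposal does not prove the statement.
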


Let us see that Proposition~\ref{prcl22} implies Claim~\ref{claimmain2} before we proceed with its proof. We say that a property $\Pi$ holds for \textit{almost all} $m$-vectors over an infinite field $F$ if there is a non-zero polynomial $\psi$ with $m$ variables and coefficients in an extension of $F$ such that $\psi(v)=0$ is a necessary condition for $v$ not to possess $\Pi$.

\begin{lem}\label{fol}
Proposition~\ref{prcl22} implies Claim~\ref{claimmain2}.
\end{lem}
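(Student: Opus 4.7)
The plan is to combine Proposition~\ref{prcl22} with the standard transcendence-degree-versus-rank inequality to lower-bound $\min\rank_\Fb T_{111}\,\mathrm{mod}(U_1,U_2,U_3)$, invoke the symmetry between the $(x,y,z)$- and $(\xi,\gamma,\zeta)$-halves of $T$ to obtain the matching bound for $T_{222}$, and then descend the resulting strict inequality from the generic tensor to a specialization $v^*\in\F^N$ via a constructibility argument.

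For the generic bound, fix any $\Phi\in T_{111}\,\mathrm{mod}(U_1,U_2,U_3)$ of rank $r$ over $\Fb$ and pick a decomposition $\Phi=\sum_{s=1}^{r}p_s\otimes q_s\otimes w_s$ with $p_s,q_s,w_s\in\Fb^n$. The entries of $\Phi$ then lie in $\mathbb{K}(p_s^{(i)},q_s^{(j)},w_s^{(k)})$, a field of transcendence degree at most $(3n-2)r$ over $\mathbb{K}$. Comparing with Proposition~\ref{prcl22} forces
$$r\;\geq\;\frac{n^3-6\sqrt{3}\,n^{2.5}}{3n-2}\;>\;\tfrac{n^2}{3}-2\sqrt{3}\,n^{1.5},$$
uniformly in $\Phi$, which is therefore a lower bound on $\min\rank_\Fb T_{111}\,\mathrm{mod}(U_1,U_2,U_3)$ itself. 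The construction of $T$ is manifestly symmetric under $(x,y,z,I_1,J_1,K_1)\leftrightarrow(\xi,\gamma,\zeta,I_2,J_2,K_2)$, so the same argument, applied after this swap, gives the same bound for $\min\rank_\Fb T_{222}\,\mathrm{mod}(V_1,V_2,V_3)$. Summing and using $\tfrac{2}{3}>0.34$, the sum strictly exceeds $r(n)$ for all sufficiently large $n$.

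For the descent, fix an integer $r$ just below $\tfrac{n^2}{3}-2\sqrt{3}\,n^{1.5}$ and view the condition ``$\min\rank_{\overline{\F}}T(v^*)_{111}\,\mathrm{mod}(U_1(v^*),U_2(v^*),U_3(v^*))\leq r$'' as an existential first-order statement in the entries $v^*$: one asks for coefficients $A,B,C$ and rank-one data $p_s,q_s,w_s$ in $\overline{\F}$ satisfying the appropriate tensor equation. By Chevalley's theorem, the truth locus $X\subseteq\overline{\F}^N$ is constructible. If $X$ were Zariski-dense then, by quantifier elimination in algebraically closed fields, the existential would also be satisfied when $v^*$ is replaced by the formal tuple of variables $(x_i^\alpha,\ldots,\zeta_i^\alpha)$, producing $A,B,C,p_s,q_s,w_s\in\Fb$ that realize a rank-$r$ decomposition for the generic $\Phi$ and contradicting the bound just established. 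Hence $X$ lies in the zero locus of some non-zero polynomial $\psi$; the analogous set for $T_{222}$ lies in $\{\psi'=0\}$. Since $\F$ is infinite, one picks $v^*\in\F^N$ with $\psi(v^*)\psi'(v^*)\neq 0$, and for this specialization both $\overline{\F}$-rank lower bounds survive while $\rank_\F T(v^*)\leq r(n)$ follows by specializing the original rank-$r(n)$ decomposition of $T$. This is exactly Claim~\ref{claimmain2}.

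The principal obstacle is the descent step: one needs to ensure that the bad specialization locus is not merely a proper subset of $\F^N$ but actually fails to be Zariski-dense, so that the ``almost all'' language of the lemma's preamble applies. This rests on the constructibility of the locus (via quantifier elimination in ACF) and on translating ``Zariski-dense'' into ``the existential survives at the generic point''; the remaining transcendence-degree bookkeeping and the numerical inequality $\tfrac{2}{3}>0.34$ are routine.
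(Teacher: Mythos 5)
Your proposal is correct and follows essentially the same route as the paper: reduce the rank of any $\Phi\in T_{111}\,\mathrm{mod}(U_1,U_2,U_3)$ from below via the transcendence-degree count of Proposition~\ref{prcl22}, use the symmetry of $T$ to get the same bound for $T_{222}$, and descend from the generic tensor to a specialization in $\F^N$ by quantifier elimination over algebraically closed fields (your Chevalley/constructible-set phrasing is the same argument the paper runs via clause-form quantifier elimination). The only cosmetic differences are your slightly sharper $(3n-2)r$ bound in place of the paper's $3nr$ and your handling of the two summands by two separate polynomials rather than one formula.
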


\begin{proof}
We write~\eqref{eqcl2} as a first-order formula $\Psi$ in the language of fields with free variables $\mathcal{V}=(x_i^\alpha,\xi_i^\alpha,y_i^\alpha,\gamma_i^\alpha,z_i^\alpha,\zeta_i^\alpha)$. The quantifier elimination (see Theorem~1.23 in~\cite{QuanElim}) allows us to assume that $\Psi$ is a conjunction of clauses $(f_1*_10)\vee\ldots\vee(f_k*_k0)$, where $*_k$ is either $=$ or $\neq$, and $f_i$ is a polynomial that depends on $\mathcal{V}$ and has integral coefficients. Therefore, the inequality~\eqref{eqcl2} does either hold for almost all assignments of $\mathcal{V}$ or fail for almost all such assignments. Now it is enough to prove~\eqref{eqcl2} for generic $\mathcal{V}$, that is, for the tensor $T$ defined in the beginning of this section. Note that the summands in the right-hand side of~\eqref{eqcl2} are equal, so it remains to prove that $\rank_\Fb \Phi>r(n)/2$. Since the entries of any rank-one $n\times n\times n$ tensor are products of $3n$ elements, the quantity as in Proposition~\ref{prcl22} cannot be less than $3n\rank_\Fb \Phi$. Therefore, Proposition~\ref{prcl22} implies that $\rank_\Fb \Phi$ is at least $n^2/3-2\sqrt{3}n^{2.5}$, which is greater than $r(n)/2$ for large $n$.
\end{proof}

The following is a very well known result in algebraic complexity theory; see Theorem~2.2 in~\cite{Jacobian} for a simple elementary proof in characteristic zero and also Theorem~22, Remark~23 in~\cite{Jacobian2} for a more advanced treatment for positive characteristic. In particular, the first two paragraphs of the proof of Theorem~2.2 in~\cite{Jacobian} remain valid even if one replaces $\C$ in the formulation by arbitrary field $F$; this is so because it is sufficient to consider the case when $F$ is algebraically closed, and in this case the polynomial $G$ as in their proof should be separable.

\begin{lem}\label{lemJacobian}\textup{(See Theorem~2.2 in~\cite{Jacobian}, Remark~23 in~\cite{Jacobian2}.)}
Let $F$ be a field, and let $p_1,\ldots,p_q$ be elements of the algebraic closure of $F(s_1,\ldots,s_q)$. Looking at $p_1,\ldots,p_q$ as algebraic functions of variables $s_1,\ldots,s_q$, we define their Jacobian to be the $q\times q$ matrix with $(i,j)$ entry given by $\left(\partial p_i/\partial s_j\right)$. If this Jacobian has non-zero determinant, then $p_1,\ldots,p_q$ are algebraically independent over $F$.
\end{lem}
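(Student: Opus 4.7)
The plan is to argue by contrapositive: assume $p_1,\ldots,p_q$ are algebraically dependent over $F$ and deduce that the Jacobian determinant vanishes.

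First I would reduce to the case where $F$ is algebraically closed, hence perfect. A polynomial relation with coefficients in $F$ is automatically a relation with coefficients in $\overline{F}$, while the Jacobian determinant is computed from the $\partial p_i/\partial s_j$ inside a fixed extension and is unchanged under this enlargement. The standard fact that the derivations $\partial/\partial s_j$ on $F(s_1,\ldots,s_q)$ extend canonically to its algebraic closure (using perfectness of the base) makes these partial derivatives well-defined.

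Under the dependence hypothesis, choose a non-zero $G \in F[T_1,\ldots,T_q]$ of minimal total degree with $G(p_1,\ldots,p_q) = 0$. Differentiating the identity $G(p_1,\ldots,p_q) = 0$ with respect to each $s_j$ via the chain rule gives
\begin{equation*}
\sum_{i=1}^q \frac{\partial G}{\partial T_i}(p_1,\ldots,p_q)\cdot \frac{\partial p_i}{\partial s_j} = 0 \qquad (j=1,\ldots,q),
\end{equation*}
so the vector $v = \bigl((\partial G/\partial T_i)(p_1,\ldots,p_q)\bigr)_{i=1}^q$ lies in the right kernel of the Jacobian. If some $\partial G/\partial T_i$ is not identically zero as a polynomial in $T_1,\ldots,T_q$, then it has total degree strictly less than $\deg G$, and by minimality of $G$ it cannot vanish at $(p_1,\ldots,p_q)$; hence $v \neq 0$ and the Jacobian is singular, which is what we want.

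The main obstacle is ruling out the degenerate case in which \emph{every} $\partial G/\partial T_i$ vanishes identically as a polynomial, because then $v = 0$ automatically and the kernel argument yields nothing. In characteristic zero this is immediate: all partials vanishing forces $G$ to be a constant, contradicting $G \neq 0$. In characteristic $p > 0$ it means $G$ is a polynomial in $T_1^p,\ldots,T_q^p$, and here the reduction to perfect $F$ is essential: one can write $G = H^p$ for some $H \in F[T_1,\ldots,T_q]$, and then $H(p_1,\ldots,p_q)^p = G(p_1,\ldots,p_q) = 0$ forces $H(p_1,\ldots,p_q) = 0$ with $\deg H = \deg G / p < \deg G$, contradicting the choice of $G$. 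The paper's remark that $G$ may be chosen separable in the algebraically closed case is precisely this observation.
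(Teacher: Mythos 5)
Your argument is correct and is essentially the proof the paper points to: the paper does not prove this lemma itself but cites Theorem~2.2 of Ehrenborg--Rota together with the one-line remark that it suffices to take $F$ algebraically closed, in which case the minimal relation $G$ can be taken separable, and your contrapositive via the chain rule on a minimal-degree $G$ plus the $G=H^p$ reduction in characteristic $p$ is exactly that argument. (One small overstatement in a side remark: derivations extend canonically only to \emph{separable} algebraic extensions of $F(s_1,\ldots,s_q)$, not to the full algebraic closure, so the well-definedness of $\partial p_i/\partial s_j$ is really an implicit hypothesis of the lemma rather than a consequence of perfectness of $F$; this does not affect the logic of your proof.)
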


\begin{lem}\label{lemgenmatr1}
Let $F$ be an infinite field, let $q<n$ be positive integers, and let $\mu$ be an $n$-vector that has at least $q$ degrees of freedom over $F$. Then the $q$-vector $Q\mu$ is generic over $F$ for almost all $q\times n$ matrices $Q$ with entries in $F$.
\end{lem}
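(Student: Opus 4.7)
The plan is to apply the Jacobian criterion of Lemma~\ref{lemJacobian} after exhibiting a transcendence basis among the coordinates of $\mu$. Let $d = \operatorname{trdeg}_F F(\mu_1,\ldots,\mu_n)$, so $d \geq q$ by hypothesis. Reordering the coordinates of $\mu$ is harmless: if the conclusion holds for $P\mu$ with witnessing polynomial $\psi$, then $\tilde\psi(Q) = \psi(QP^{-1})$ witnesses it for $\mu$, since composing with an invertible change of variables preserves non-vanishing. Hence I may assume $\mu_1,\ldots,\mu_d$ are algebraically independent over $F$ and each $\mu_k$ with $k > d$ is algebraic over $F(\mu_1,\ldots,\mu_d)$, so that every $\mu_k$ becomes an algebraic function of $\mu_1,\ldots,\mu_d$ with well-defined partial derivatives.

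Next I form the $n \times d$ matrix $\Delta$ with entries $\Delta_{kj} = \partial \mu_k / \partial \mu_j$ and let $\Delta'$ be its first $q$ columns. Since $\partial \mu_k/\partial \mu_j = \delta_{kj}$ for $k,j \leq d$, the top $d \times d$ block of $\Delta$ is the identity, and in particular $\Delta'$ has rank $q$. I then set
$$p(Q) = \det(Q\Delta'),$$
viewed as a polynomial in the entries of $Q$ with coefficients in the extension $\overline{F(\mu_1,\ldots,\mu_d)}$ of $F$. By the Cauchy--Binet formula, its coefficients are precisely the various $q \times q$ minors of $\Delta'$; because $\Delta'$ has full rank $q$, at least one such minor is non-zero, so $p$ is not identically zero.

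For any $Q \in F^{q \times n}$ with $p(Q) \neq 0$, the matrix $\bigl(\partial (Q\mu)_i/\partial \mu_j\bigr)_{i,j \leq q}$ equals $Q\Delta'$ and is invertible. Applying Lemma~\ref{lemJacobian} with base field $F(\mu_{q+1},\ldots,\mu_d)$, $q$ variables $\mu_1,\ldots,\mu_q$, and the $q$ elements $(Q\mu)_1,\ldots,(Q\mu)_q$ (which lie in the algebraic closure of $F(\mu_1,\ldots,\mu_d)$, hence in the closure of $F(\mu_{q+1},\ldots,\mu_d)(\mu_1,\ldots,\mu_q)$), yields algebraic independence of $(Q\mu)_1,\ldots,(Q\mu)_q$ over $F(\mu_{q+1},\ldots,\mu_d)$, and therefore over $F$. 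This is exactly the statement that $Q\mu$ is generic over $F$, and $p$ serves as the polynomial in the definition of ``almost all''. The only genuinely delicate point is the formation of formal partial derivatives for algebraic elements in positive characteristic, but this is precisely the separability subtlety already absorbed into Lemma~\ref{lemJacobian} by the paper's choice of citation.
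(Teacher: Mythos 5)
Your proof is correct and follows essentially the same route as the paper's: pick a transcendence basis among the coordinates of $\mu$, observe via Lemma~\ref{lemJacobian} that full rank of the Jacobian of $Q\mu$ is a sufficient condition for genericity and is a polynomial condition on $Q$, and verify non-vanishing by substituting a coordinate-selection matrix for $Q$. Your write-up is slightly more careful than the paper's in that it explicitly reduces the non-square Jacobian to the square case of Lemma~\ref{lemJacobian} by absorbing $\mu_{q+1},\ldots,\mu_d$ into the base field, but this is a refinement of detail, not a different argument.
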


\begin{proof}
We pick a transcendence basis $\mu_{s_1},\ldots,\mu_{s_{k}}$ of the entries of $\mu$. Every coordinate of $\mu$ is now an algebraic function of this basis, so we can define the Jacobian as the $q\times k$ matrix with $(i,j)$ entry given by $\left(\partial (Q\mu)_i/\partial(\mu_{s_j})\right)$. According to Lemma~\ref{lemJacobian}, the Jacobian having rank $q$ is a sufficient condition for $Q\mu$ to be generic. This condition is valid when $Q$ is the matrix with ones at the positions $(t,s_t)$ and zeros everywhere else, so one of the $q\times q$ minors of the Jacobian is really a non-zero polynomial in the entries of $Q$.
\end{proof}

\begin{lem}\label{lemgenmatr}
Let $F$ be an infinite field, let $d<n$ be positive integers, and let $M$ be an $n\times m$ matrix that has at least $mn-\delta$ degrees of freedom over $F$. Let $Q$ be an almost arbitrary $(n-d)\times n$ matrix over $F$. Then there are at most $\delta/d$ columns in $QM$ whose removal leaves a matrix that is generic with respect to $F$.
\end{lem}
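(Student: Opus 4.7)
My plan is to classify the columns of $M$ by the transcendence degree they add, and then apply a Jacobian argument to handle the good ones. Write $m_j$ for the $j$th column of $M$ and let $\alpha_j$ denote the transcendence degree of $F(m_1,\ldots,m_j)$ over $F(m_1,\ldots,m_{j-1})$, so that $\alpha_j\leqslant n$ and $\sum_j\alpha_j=mn-\delta$. Call $j$ \emph{bad} if $\alpha_j<n-d$; each bad index contributes at least $d+1$ to the deficiency $\sum_j(n-\alpha_j)=\delta$, whence the set $B$ of bad indices satisfies $|B|\leqslant\delta/(d+1)\leqslant\delta/d$. Setting $G=\{1,\ldots,m\}\setminus B$, it remains to show that for almost all $F$-valued $Q$ the $(n-d)|G|$ entries of the vectors $\{Qm_j\}_{j\in G}$ are algebraically independent over $F$.

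I would build a transcendence basis of $F(M)$ over $F$ in a column-greedy manner: for each $j$, let $B_j$ be a subset of the entries of $m_j$ of cardinality $\alpha_j$ such that $B_1\cup\cdots\cup B_j$ is a transcendence basis of $F(m_1,\ldots,m_j)$ over $F$. For every $j\in G$, pick any $B'_j\subseteq B_j$ of size $n-d$; set $B'=\bigsqcup_{j\in G}B'_j$ and $F_0=F(B_m\setminus B')$, where $B_m=\bigsqcup_j B_j$. Then $B'$ is algebraically independent over $F_0$, and every entry of $M$ lies in the algebraic closure of $F_0(B')$.

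The key computation is the Jacobian of the $(n-d)|G|$ entries $(Qm_j)_i$ (for $i\in\{1,\ldots,n-d\}$ and $j\in G$) with respect to the $(n-d)|G|$ variables in $B'$, so that Lemma~\ref{lemJacobian} may be invoked with base field $F_0$. Ordering both rows and columns of the Jacobian by the column index $j$, the $(j,j')$-block vanishes for $j'>j$: the value $m_{kj}$ is algebraic over $F(B_1\cup\cdots\cup B_j)$ and hence does not depend on the variables in $B'_{j'}$. The Jacobian is therefore block lower triangular, with $j$th diagonal block $QD_j$, where $D_j$ is the $n\times(n-d)$ matrix of partial derivatives $\partial m_{kj}/\partial m_{k'j}$ whose submatrix with row and column indices both in $B'_j$ is the identity. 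In particular $\rank D_j=n-d$, so $\det(QD_j)$ is a non-zero polynomial in the entries of $Q$.

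The determinant of the whole Jacobian equals $\prod_{j\in G}\det(QD_j)$, a non-zero polynomial in $Q$, so for almost all $F$-valued $Q$ it does not vanish and Lemma~\ref{lemJacobian} delivers the required algebraic independence over $F_0$, hence over $F$. The main obstacle is the simultaneous arrangement of the transcendence basis so that the Jacobian is both block lower triangular \emph{and} has full-rank diagonal blocks; the column-greedy construction of the $B_j$'s accomplishes both requirements at once, after which the off-diagonal parts play no role in the determinant.
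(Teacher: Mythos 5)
Your proof is correct and follows the same strategy as the paper's: classify the columns by the incremental transcendence degree they contribute, bound the number of deficient ones by $\delta/d$ via the total deficiency, and show that the remaining columns of $QM$ are jointly generic for almost all $Q$. The only real difference is presentational: the paper removes weak columns by induction on $m$ and applies Lemma~\ref{lemgenmatr1} one column at a time through the tower of fields $F_j'$, whereas you unroll this into a single block-lower-triangular Jacobian whose diagonal blocks $QD_j$ are exactly the per-column Jacobians of Lemma~\ref{lemgenmatr1}.
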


\begin{proof}
First, we assume $m=1$; if $d\leqslant \delta$, then there is nothing to prove because the assumption of the lemma allows us to remove the only column of $M$.
Otherwise, we apply Lemma~\ref{lemgenmatr1} and conclude the consideration of the case $m=1$.

Now we assume $m>1$ and proceed by induction; we define $F_j$ and $F_j'$ as the fields obtained from $F$ by adjoining the entries of the first $j$ columns of $M$ and $QM$, respectively. A $\hat{\jmath}$th column is called \textit{weak}  if the transcendence degree of $F_{\hat{\jmath}}$ over $F_{\hat{\jmath}-1}$ is at most $n-d$. The removal of any weak column of $M$ leaves us a matrix having at least $mn-\delta-n+d$ degrees of freedom, and then we can complete the proof by induction. If no weak columns exist, then Lemma~\ref{lemgenmatr1} shows that the $j$th column of $QM$ is generic with respect to $F_{j-1}$ (and consequently with respect to the field $F_{j-1}'\subseteq F_{j-1}$). In particular, the extension $F_j'\supset F_{j-1}'$ has transcendence degree $n-d$, so $F_m'$ has transcendence degree $m(n-d)$ over $F=F_0'$.
\end{proof}

\begin{thr}\label{thrLickt}\textup{(See Theorem~4.4 in~\cite{Lickteig}.)}
Let $u^1,\ldots,u^k,v^1,\ldots,v^k,w^1,\ldots,w^k$ be $n$-vectors whose coefficients are algebraically independent over a field $F$ in common. If $k>n^3/(3n-2)$, then the tensor $\sum_{\alpha=1}^k u^\alpha\otimes v^\alpha\otimes w^\alpha$ is generic over $F$.
\end{thr}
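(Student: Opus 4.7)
The plan is to apply Lemma~\ref{lemJacobian} to the polynomial map
$\Phi\colon (u^\alpha_i, v^\alpha_j, w^\alpha_l)_{\alpha,i,j,l}\mapsto\bigl(\sum_\alpha u^\alpha_i v^\alpha_j w^\alpha_l\bigr)_{i,j,l}$,
treating the inputs as $3nk$ independent variables and the outputs as the $n^3$ entries of the target tensor. The partial derivative of the tensor with respect to $u^\alpha_i$ is $e_i\otimes v^\alpha\otimes w^\alpha$, and analogously for the $v$ and $w$ variables, so the column span of the Jacobian at any point coincides with $\sum_\alpha V_\alpha$, where $V_\alpha$ is the affine tangent space to the Segre cone at $u^\alpha\otimes v^\alpha\otimes w^\alpha$. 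Each $V_\alpha$ has dimension at most $3n-2$ because the reparametrization $(u,v,w)\mapsto(\lambda u,\mu v,(\lambda\mu)^{-1}w)$ gives a two-dimensional stabilizer of the Segre map.

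By Lemma~\ref{lemJacobian}, to prove algebraic independence of the $n^3$ entries of the tensor over $F$ it is enough to exhibit a single evaluation of the variables at which the $n^3\times 3nk$ Jacobian has rank $n^3$; the non-vanishing of the corresponding $n^3\times n^3$ minor at that point shows that this minor is a non-zero polynomial, which by Lemma~\ref{lemJacobian} is exactly what is needed at the generic point. Equivalently, one needs one choice of $u^\alpha,v^\alpha,w^\alpha$ over $F$ for which $\sum_\alpha V_\alpha=F^n\otimes F^n\otimes F^n$. The dimension count $(3n-2)k>n^3$ coming from the hypothesis matches exactly the dimension budget for such a decomposition, so the task reduces to a constructive packing.

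For the construction I would try a structured choice: partition $\{1,\ldots,n\}^3$ into $k$ pieces of size at most $3n-2$ and, for each $\alpha$, assign $u^\alpha,v^\alpha,w^\alpha$ so that the directions in $V_\alpha$ of the form $e_i\otimes v^\alpha\otimes w^\alpha$, $u^\alpha\otimes e_j\otimes w^\alpha$, $u^\alpha\otimes v^\alpha\otimes e_l$ cover the $\alpha$th piece once the two-dimensional scaling redundancy is removed. A convenient first attempt is to fix very sparse supports for $u^\alpha$, $v^\alpha$, $w^\alpha$ (for example, supported on a few coordinates dictated by the chosen partition), degenerate further so the $V_\alpha$ tile the coordinate basis of $F^{n^3}$ cleanly, and then transfer non-vanishing of the Jacobian minor back to the generic configuration by semicontinuity.

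The main obstacle is that the threshold $n^3/(3n-2)$ is sharp, so the packing must be essentially wasteless: any proof has to pair up the two-dimensional scaling redundancy of each summand against a completely rigid budget, and a naive choice of $u^\alpha,v^\alpha,w^\alpha$ will leave gaps or overlaps that prevent the Jacobian from attaining the maximum rank $n^3$. I expect the technical work to lie in a careful combinatorial choice of the partition and of the supports of the vectors, together with an induction on $n$ or a suitable degeneration handling the case when $(3n-2)$ does not divide $n^3$, so that one particular $n^3\times n^3$ minor can be evaluated in closed form and shown to be non-zero.
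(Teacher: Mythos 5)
The paper does not prove this statement at all: it is imported verbatim as Theorem~4.4 of Lickteig's paper \emph{Typical tensorial rank}, so there is no internal proof to compare yours against. Judged on its own terms, your proposal correctly sets up the standard reduction --- the Jacobian of the parametrization has column span $\sum_\alpha V_\alpha$ with each affine tangent space $V_\alpha$ to the Segre cone of dimension $3n-2$ (this is Terracini's lemma), and by Lemma~\ref{lemJacobian} it suffices to exhibit one specialization of the $u^\alpha,v^\alpha,w^\alpha$ at which these spaces span all of $F^{n^3}$. But everything after that point is the theorem, not a route to it. The assertion that $k(3n-2)>n^3$ generic tangent spaces actually span the ambient space is exactly the statement that the $k$-th secant variety of $\mathbb{P}^{n-1}\times\mathbb{P}^{n-1}\times\mathbb{P}^{n-1}$ is non-defective, and this is the entire content of Lickteig's result; his proof is a lengthy induction on formats with carefully chosen degenerations, not a routine packing. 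Your proposal stops at ``I would try a structured choice'' and ``I expect the technical work to lie in a careful combinatorial choice,'' which is an accurate description of where the difficulty lives but leaves it entirely undone.

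There is also a concrete warning sign that no naive dimension-budget packing can work uniformly: for $n=3$ and $k=4$ one has $k>27/7$, yet the fourth secant variety of $Seg(\mathbb{P}^2\times\mathbb{P}^2\times\mathbb{P}^2)$ is defective (it is the degree-$9$ hypersurface cut out by Strassen's equation), so the four generic tangent spaces do \emph{not} span $F^{27}$ and the sum of four generic rank-one $3\times3\times3$ tensors is \emph{not} generic. Any correct argument must detect and isolate this exceptional format, which your tiling heuristic cannot do, since the raw dimension count is identical there. (This does not harm the paper, which only uses the theorem for large $n$, but it shows your proposed minor cannot be nonzero for every $(n,k)$ satisfying the stated inequality, so the construction you sketch must fail somewhere and the proof must be more delicate than a partition of $\{1,\ldots,n\}^3$.) In short: the reduction is right, the proof is missing.
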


\begin{obs}\label{obs123}
Let $p_1,\ldots,p_q$ be polynomials of the same total degree $d$ over a field $F$. If $(p_1^{in},\ldots,p_q^{in})$ are generic over $F$, then $(p_1,\ldots,p_q)$ are generic over $F$ as well, where $p^{in}$ denotes the sum of all monomials of $p$ whose degree is maximal.
\end{obs}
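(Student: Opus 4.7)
The plan is to prove the contrapositive: assume that $(p_1,\ldots,p_q)$ are algebraically dependent over $F$, and deduce the same for $(p_1^{in},\ldots,p_q^{in})$. Concretely, I would pick a non-zero polynomial $R\in F[t_1,\ldots,t_q]$ with $R(p_1,\ldots,p_q)=0$ and exhibit a non-zero polynomial witnessing algebraic dependence of the initial forms.

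The main idea is to grade $R$ by its total degree in the $t_j$'s. Write $R=\sum_{k\geqslant 0} R_k$, where $R_k$ collects the monomials of $R$ of total $t$-degree equal to $k$, and let $D$ be the largest integer with $R_D\neq 0$. Since every $p_j$ has total degree exactly $d$ in the original variables, the product $p_1^{i_1}\cdots p_q^{i_q}$ has total degree exactly $d(i_1+\cdots+i_q)$, and its top-degree component in the original variables is $(p_1^{in})^{i_1}\cdots (p_q^{in})^{i_q}$. I would then observe that when $R(p_1,\ldots,p_q)$ is expanded and sorted by total degree in the original variables, the only contribution to degree $dD$ comes from $R_D$, and its top-degree component is exactly $R_D(p_1^{in},\ldots,p_q^{in})$; all terms coming from $R_k$ with $k<D$ have total degree at most $dk<dD$.

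Since $R(p_1,\ldots,p_q)=0$, every homogeneous component vanishes, in particular the degree-$dD$ component. This forces $R_D(p_1^{in},\ldots,p_q^{in})=0$, and by construction $R_D$ is a non-zero polynomial, so $(p_1^{in},\ldots,p_q^{in})$ are algebraically dependent over $F$. This contradicts the hypothesis that they are generic (hence algebraically independent) and completes the argument.

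I do not foresee a serious obstacle: the only subtlety is to be sure that the top-degree component in the original variables of $R_D(p_1,\ldots,p_q)$ really equals $R_D(p_1^{in},\ldots,p_q^{in})$ rather than vanishing due to cancellation. This is where the uniformity assumption $\deg p_j=d$ is used, since it guarantees that $p_j^{in}\neq 0$ and that all products $p_1^{i_1}\cdots p_q^{i_q}$ with $|I|=D$ contribute to the same total degree $dD$ in the original variables.
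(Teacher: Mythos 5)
Your proposal is correct and is essentially the paper's own argument written out in full: the paper's proof is the one-liner ``if $h(p_1,\ldots,p_q)=0$ then $h^{in}(p_1^{in},\ldots,p_q^{in})=0$,'' and your $R_D$ is exactly $h^{in}$, the top-degree part of the dependence relation. Your expanded justification via the degree-$dD$ homogeneous component is the right way to see why that implication holds.
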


\begin{proof}
If $h(p_1,\ldots,p_q)=0$, then $h^{in}(p_1^{in},\ldots,p_q^{in})=0$.
\end{proof}


Let $P=\sum_\alpha u^\alpha\otimes v^\alpha\otimes w^\alpha$ be an $n\times n\times n$ tensor over a field $F$, and let $Q$ be a $q\times n$ matrix over a subfield $K\subset F$. We define $P_Q=\sum_\alpha (Qu^\alpha)\otimes (Qv^\alpha)\otimes (Qw^\alpha)$ and we note that $P_Q$ is obtained from $T$ by a sequence of $K$-linear transformations of its $1$-slices, $2$-slices, and $3$-slices. In particular, $P$ has at least as many degrees of freedom over $K$ as $P_Q$ does have.
Now we are ready to prove Proposition~\ref{prcl22}.


\begin{proof}[The proof of Proposition~\ref{prcl22}.]
Let $M$ be the $n\times 3r(n)$ matrix formed with the vectors $x^\alpha,y^\alpha,z^\alpha$ as in the definition of $T$. Since the field $\mathbb{K}$ defined in~\eqref{eqdefk} has at most $3nr(n)+3n^2$ degrees of freedom over $\F$, and since $\mathbb{K}(x^\alpha,y^\alpha,z^\alpha)$ contains $\F(x^\alpha,y^\alpha,z^\alpha,\xi^\alpha,\gamma^\alpha,\zeta^\alpha)$, the matrix $M$ has at least $6nr(n)-(3nr(n)+3n^2)=3nr(n)-3n^2$ degrees of freedom over $\mathbb{K}$. Now we take $d=\lfloor \sqrt{3n}\rfloor$ and apply Lemma~\ref{lemgenmatr} to the matrix $M$ and field $\mathbb{K}$; we denote the vector $Q\pi^\alpha$ by $\overline{\pi}^\alpha$, where $\pi$ may stand for $a,b,c,x,y,z$. The $(n-d)\times(n-d)\times(n-d)$ tensor $\Phi_Q$ equals 
\begin{equation}\label{eqprcl22}\sum_{\alpha=1}^{r(n)}\left(\overline{x}^\alpha \overline{y}^\alpha \overline{z}^\alpha+\overline{a}^\alpha \overline{y}^\alpha \overline{z}^\alpha+\overline{x}^\alpha \overline{b}^\alpha \overline{z}^\alpha+\overline{x}^\alpha \overline{y}^\alpha \overline{c}^\alpha\right).\end{equation}

We denote by $\mathcal{A}$ the set of all $\alpha$ such that none of the three columns $x^\alpha, y^\alpha, z^\alpha$ were identified as `weak' and removed while applying Lemma~\ref{lemgenmatr}. According to this lemma, the union of the vectors $\overline{x}^\alpha, \overline{y}^\alpha, \overline{z}^\alpha$ with $\alpha\in\mathcal{A}$ is generic over $\mathbb{K}$. The number of weak columns is at most $3n^2/d$, so that $|\mathcal{A}|>n^2/(3n-2)$, and the $\mathcal{A}$-part of the sum in~\eqref{eqprcl22} is a tensor generic over $\mathbb{K}$ according to Theorem~\ref{thrLickt} and Observation~\ref{obs123}. (In order to use these, we look at the coordinates of $\overline{x}^\alpha, \overline{y}^\alpha, \overline{z}^\alpha$ with $\alpha\in\mathcal{A}$ as variables and note that $\overline{a}^\alpha, \overline{b}^\alpha, \overline{c}^\alpha$ have coordinates in $\mathbb{K}$.) 
Therefore, $\Phi_Q$ is the sum of a tensor generic over $\mathbb{K}$ and a tensor of rank at most $3n^2/d$, so it has at least $(n-d)^3-3n(3n^2/d)\geqslant n^3-6\sqrt{3}n^{2.5}$ degrees of freedom over $\mathbb{K}$.
\end{proof}

Therefore, we completed the proof of Proposition~\ref{prcl22}, which implies Claim~\ref{claimmain2} according to Lemma~\ref{fol}. Since Claim~\ref{claimmain1} was proved in the previous section, we can use Theorem~\ref{thrmain} and complete the proof of Theorem~\ref{thrmain2}.

\section{Concluding remarks}

As said above, the author disproved the direct sum conjecture in the original formulation, that is, over any infinite field. In fact, his proof is going to work for sufficiently large finite fields as well (although he did not try to give an effective upper bound on the size of the smallest field for which it may not work). The author thinks that the direct sum conjecture is false over any field, but in order to construct a counterexample in the way similar to this paper, one would need to estimate the cardinalities of the arising sets of matrices instead of the corresponding dimensions as it is done in the current proof. Actually, it seems possible to achieve a counterexample in this way despite a much greater amount of technical difficulties.

A multidimensional analogue of the direct sum conjecture states that the rank of $d$-dimensional tensors is additive with respect to direct sums. This problem remains open for $d\geqslant 4$, and I am not sure that the present approach can lead to a progress on the multidimensional version. The analogous statement but restricted to symmetric tensors is open already for $d\geqslant 3$, see~\cite{CarCatChi, LandBook, Teit} for a review of the current state of art and new results on this problem. We also mention the concept of \textit{border rank} of a complex tensor, which is the smallest $r$ such that a given tensor is the limit of a sequence of tensors of rank $r$; the direct sum conjecture is known to fail for border rank of $d$-tensors at least when $d=3$, see the original construction in Section~6 of~\cite{Sch} and a discussion in Chapter~11 of~\cite{LandBook}.

The author would like to thank Mateusz Micha\l{}ek for pointing him out to some evidence against the direct sum conjecture, which includes a counterexample for the border rank analogue of it. This has happened when the author was visiting the Max Planck Institute for Mathematics in the Sciences in Leipzig, and he would like to thank Mateusz for invitation and the colleagues from the institute for hospitality. The idea of Claim~\ref{claimmain1} came to me when I was working on a revision of the paper~\cite{myComon}, which contains a related construction. Therefore, I would also like to thank the reviewers of \textit{SIAM Journal of Applied Algebraic Geometry} for helpful comments on~\cite{myComon}, which may have stimulated the development of the approach of this paper.

\end{document}